\theoremstyle{plain}
\newtheorem{theorem}{Theorem}[section]
\crefname{theorem}{Theorem}{Theorems}
\Crefname{theorem}{Theorem}{Theorems}
\newtheorem{proposition}[theorem]{Proposition}
\crefname{proposition}{Proposition}{Propositions}
\Crefname{proposition}{Proposition}{Propositions}
\newtheorem{lemma}[theorem]{Lemma}
\crefname{lemma}{Lemma}{Lemmas}
\Crefname{lemma}{Lemma}{Lemmas}
\newtheorem{corollary}[theorem]{Corollary}
\crefname{corollary}{Corollary}{Corollaries}
\Crefname{corollary}{Corollary}{Corollaries}
\crefname{claim}{Claim}{Claims}
\Crefname{claim}{Claim}{Claims}
\crefname{property}{Property}{Properties}
\Crefname{property}{Property}{Properties}
\crefname{problem}{Problem}{Problems}
\Crefname{problem}{Problem}{Problems}
\DeclareMathOperator{\coker}{coker}
\theoremstyle{definition}
\newtheorem{definition}[theorem]{Definition}
\crefname{definition}{Definition}{Definitions}
\Crefname{definition}{Definition}{Definitions}
\crefname{notation}{Notation}{Notations}
\Crefname{notation}{Notation}{Notations}
\crefname{convention}{Convention}{Conventions}
\Crefname{convention}{Convention}{Conventions}
\crefname{condition}{Condition}{Conditions}
\Crefname{condition}{Condition}{Conditions}
\crefname{assumption}{Assumption}{Assumptions}
\Crefname{assumption}{Assumption}{Assumptions}
\theoremstyle{remark}
\newtheorem{remark}[theorem]{Remark}
\crefname{remark}{Remark}{Remarks}
\Crefname{remark}{Remark}{Remarks}
\newtheorem{example}[theorem]{Example}
\crefname{example}{Example}{Examples}
\Crefname{example}{Example}{Examples}
\crefname{section}{Section}{Sections}
\Crefname{section}{Section}{Sections}
\crefname{subsection}{Subsection}{Subsections}
\Crefname{subsection}{Subsection}{Subsections}
\crefname{figure}{Figure}{Figures}
\Crefname{figure}{Figure}{Figures}
\newtheorem*{acknowledgement}{Acknowledgement}
\newcommand{\Z}{\mathbb{Z}}
\newcommand{\R}{\mathbb{R}}
\newcommand{\C}{\mathbb{C}}
\newcommand{\ind}{\mathrm{ind}}
\newcommand{\Spinc}{\mathrm{Spin}^c}
\begin{document}

\title{Localization of indices and orientations on $G$-instanton moduli spaces}
\author{Jin Miyazawa}
\address{Graduate School of Mathematical Sciences, the University of Tokyo, 3-8-1 Komaba, Meguro, Tokyo 153-8914, Japan}
\email{miyazawa@ms.u-tokyo.ac.jp}
 
\begin{abstract}
Joyce, Tanaka, and Upmeier give an orientation of the $G$-instanton moduli spaces on a closed four manifolds which is canonically defined using the the $\Spinc$ structure on the $4$-manifold. In this note, we describe the relation between the orientations given by other choices of the $\Spinc$ structures, in a slightly more general setting. 
Furthermore, we give an alternative proof of the orientability of the instanton moduli spaces and an alternative construction of the orientation by Joyce, Tanaka and Upmeier, by using Witten localization of the index of a Dirac type operator. 

\end{abstract}
\maketitle
\tableofcontents
\section{Introduction} 
The orientability of $G$-instanton moduli spaces and the existence of their canonical orientations are proved for the following cases: $G=\mathrm{U}(2)$ and $\mathrm{SO}(3)$ by S.~Donaldson (\cite{donaldson1987orientation}, \cite{donaldson1997geometry}), and $G=\mathrm{U}(n)$, $\mathrm{SU}(n)$ and $\mathrm{PSU}(n)$ by P.B.~Kronheimer (\cite{kronheimer2005four}). 

For general $G$-instantons, Joyce, Tanaka, and Upmeier (\cite[Theorem 4.6 (b)]{joyce2020orientations}) proved the orientability and constructed an orientation of instanton moduli spaces. 
(They actually showed the orientability and construct a canonical orientation to the moduli space of other gauge theoretic equations.) 
They used a $\Spinc$ structure on the $4$-manifold to construct the canonical orientation. 
In~\cite[Theorem 4.6 (c), Theorem 4.7 (b)]{joyce2020orientations}, they proved that their orientation was independent of the choice of $\Spinc$ structure if the gauge group $G$ was simply connected. 
Moreover, they gave other orientation of the $\mathrm{U}(n)$ instanton moduli space in~\cite[Theorem 4.6 (c)]{joyce2020orientations} using the orientation of $\mathrm{SU}(n+1)$-instanton moduli spaces. It does not depends on the choice of $\Spinc$ structure.

In this note, we describe the relation between the orientations given by other choices of $\Spinc$ structures under the condition that the group $G$ is an arbitrary compact and connected Lie group. Moreover, our setting is slightly more general than ~\cite{joyce2020orientations} since we prove the orientation theorems for associated vector bundles in the arbitrary $G$-representation $(\rho, V)$ rather than the adjoint representation. 

To prepare the proof of the main theorem, we give an analytic description of the construction of the orientation given in~\cite[Theorem 4.6 (b)]{joyce2020orientations} and we obtained a concise proof of the orientability of the moduli spaces of $G$-instantons. 

In the theorem below, the orientation given in \cite[Theorem 4.6 (b)]{joyce2020orientations} coincides with $o(P, \mathfrak g, \mathfrak s)$ for a principal $G$-bundle $P$ and a $\Spinc$ structure $\mathfrak s$. 
\begin{theorem}\label{obstruction}
    Let $P$ be a principal $G$ bundle and $(\rho, V)$ be a representation of $G$. 
    Let $l$ be a complex line bundle on $X$, and $\mathfrak s$ be a $\Spinc$ structure. 
    Let $o(P, V, \mathfrak s)$ and $o(P, V, \mathfrak s \otimes l)$ is the canonical orientations as defined in \cref{nameofori}. 
    We will denote by $w_2(E)$ the second Stiefel-Whitney class of a vector bundle $E$. 
    We have 
    \[
        o(P, V ,\mathfrak s)=(-1)^{\langle w_2(l) \cup w_2(V_{P}), [X]\rangle}o(P, V, \mathfrak{s}\otimes l).
    \]
\end{theorem}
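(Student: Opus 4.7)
The plan is to realise both orientations $o(P,V,\mathfrak s)$ and $o(P,V,\mathfrak s\otimes l)$ as trivialisations of a common underlying real determinant line, produced by two different complex stabilising Dirac operators, and then to identify the sign relating them with the mod-$2$ reduction of a topological index. By the construction in \cref{nameofori}, $o(P,V,\mathfrak s)$ is obtained from the canonical complex orientation of the determinant line of $D^+_{\mathfrak s}\otimes V_P$, where $D^+_{\mathfrak s}$ is the positive $\Spinc$ Dirac operator and $V_P$ is the associated bundle. Tensoring $\mathfrak s$ by $l$ sends $S^+_{\mathfrak s}$ to $S^+_{\mathfrak s}\otimes l$, so the stabilising operator producing $o(P,V,\mathfrak s\otimes l)$ is $D^+_{\mathfrak s}\otimes V_P\otimes l$.

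First I would argue that the ratio $o(P,V,\mathfrak s)/o(P,V,\mathfrak s\otimes l)\in\{\pm 1\}$ is locally constant over the space of connections on $P$, and is therefore a topological invariant of the data $(P,V,l,X)$. The two orientations come from continuous complex structures on families of Fredholm operators differing only by the twist by $l$, so additivity of determinant lines along the relative index identifies the ratio with $(-1)^N$, where
\[
N \;=\; \mathrm{ind}_{\mathbb{C}}\bigl(D^+_{\mathfrak s}\otimes V_P\otimes(l-\underline{\mathbb{C}})\bigr).
\]

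The remaining and principal step is to show $N\equiv\langle w_2(l)\cup w_2(V_P),[X]\rangle\pmod 2$. The Atiyah-Singer index theorem expands $N$ as an integral of characteristic classes involving $c_1(l)$, $c_1(\mathfrak s)$, and the Chern classes of $V_P$; reducing modulo $2$ one uses $w_2(l)\equiv c_1(l)$, together with the standard reductions of integral Chern classes to Stiefel-Whitney classes and Wu's formula on the closed $4$-manifold $X$, so that only the diagonal cup product $w_2(l)\cup w_2(V_P)$ survives. The main obstacle I expect is precisely this mod-$2$ collapse: one must verify that the Pontryagin- and $c_1(\mathfrak s)$-dependent contributions vanish modulo $2$ on every closed $4$-manifold. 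A cleaner alternative, should the direct calculation prove awkward, is a universal argument over the classifying space $BG\times B\mathrm{U}(1)$: naturality forces the ratio to be $(-1)^{\langle\alpha,[X]\rangle}$ for a unique class $\alpha\in H^4(BG\times B\mathrm{U}(1);\mathbb{Z}/2)$, which can then be pinned down by evaluation on a small number of test examples such as $l=\mathcal O(1)$ on $\mathbb{CP}^2$ with a well-chosen $G$-bundle.
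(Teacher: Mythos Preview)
Your primary approach has a gap at the very first step. The orientation $o(P,V,\mathfrak s)$ is \emph{not} simply the complex orientation of $\det\bigl(\mathrm{ind}(D^+_{\mathfrak s}\otimes V_P)\bigr)$: it is produced by the trivialisation $T(h',\Phi)$ of \eqref{triv1}, which couples the ASD operator $D_0$ to the twisted Dirac operator $D_1$ through the section $h'\in\Gamma(S^+)$ and the bundle map $\Phi$. When $\mathfrak s$ is replaced by $\mathfrak s\otimes l$ the spinor bundle $S^+$ becomes $S^+\otimes l$, the section $h'$ must be replaced by a section of this new bundle, and hence the map $T$ itself changes. Your sentence ``additivity of determinant lines along the relative index identifies the ratio with $(-1)^N$'' tacitly presupposes a canonical compatibility of the two $T$-maps that does not exist a priori. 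In fact, if one evaluates your proposed $N=\mathrm{ind}_{\mathbb C}\bigl(D^+_{\mathfrak s}\otimes (V_P)_{\mathbb C}\otimes(l-\underline{\mathbb C})\bigr)$ by Atiyah--Singer on a closed $4$-manifold, the result is $\tfrac{r}{2}\bigl(c_1(l)^2+c_1(\mathfrak s)\,c_1(l)\bigr)[X]$ with $r=\dim_{\mathbb R}V$; this does not involve $w_2(V_P)$ at all, so no amount of Wu-formula manipulation can extract $w_2(l)\cup w_2(V_P)$ from it. Subtracting the trivial-bundle contribution makes matters worse: the \emph{relative} Dirac index is $p_1(V_P)[X]$, independent of $l$, which would force the ratio to be $+1$ always, contradicting \cref{example}.

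The paper's argument is organised around exactly this difficulty. Step~1 shows directly, by building an explicit homotopy of perturbations through the operator $\hat v$ (tensoring with a section $v\in\Gamma(l)$), that the two trivialisations $T$ agree \emph{provided} $\Phi$ can be chosen to be an isomorphism over the zero locus $\Sigma=v^{-1}(0)$; the obstruction to this is precisely $w_2(V_P)|_\Sigma$, i.e.\ $\langle w_2(l)\cup w_2(V_P),[X]\rangle$. Step~2 then handles the remaining case by connected sum with the explicit $\mathbb{CP}^2$ computation of \cref{example}, where the sign change is visible because $\mathfrak s_{\mathrm{can}}\otimes K$ happens to be the \emph{conjugate} $\Spinc$ structure, so that $T$ genuinely is unchanged and only the complex orientation flips. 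Your ``cleaner alternative'' via a universal class in $H^4(BG\times B\mathrm{U}(1);\mathbb Z/2)$ is close in spirit to this second step, but it too needs something like Step~1 both to establish that the ratio is a characteristic number and to eliminate the many other candidate classes (e.g.\ $w_2(l)^2$, $w_4(V_P)$, $w_2(l)\cup w_2(X)$) before a single test example can pin down the answer.
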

Moreover, we determine the relation between the two orientations of the $\mathrm{U}(n)$ instanton moduli space. 

\begin{theorem}\label{Un}
    Let $P$ be a principal $\mathrm{U}(n)$ bundle and $\rho$ be the adjoint representation of $\mathrm{U}(n)$. Let $o(P, \mathfrak{u}(n), \mathrm{SU}(n+1))$ be the orientation of the $\mathrm{U}(n)$-instanton moduli space given in~\cite[Theorem 4.6 (c)]{joyce2020orientations}. Then we have 
    \[
        o(P, \mathfrak{u}(n), \mathrm{SU}(n+1))=(-1)^{\frac{1}{2}(c_1(S^+)c_1(V_{st})+c_1(V_{st})^2)- c_2(V_{st})}o(P, \mathfrak{u}(n), \mathfrak s)
    \]
    where $V_{st}$ is the associated bundle of $P$ in the standard representation of $\mathrm{U}(n)$. 
\end{theorem}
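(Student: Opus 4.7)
The plan is to reduce to the simply connected case via the standard embedding $\iota\colon\mathrm{U}(n)\hookrightarrow\mathrm{SU}(n+1)$, decompose the index operator using the Lie-algebra splitting of $\mathfrak{su}(n+1)$ as a $\mathrm{U}(n)$-representation, and compute the residual sign by the Atiyah--Singer index theorem.

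First I would set $\iota(A) := \mathrm{diag}(A,(\det A)^{-1})$ and let $P' := \iota_\ast P$ be the associated $\mathrm{SU}(n+1)$-bundle. By definition $o(P,\mathfrak{u}(n),\mathrm{SU}(n+1))$ is obtained by pulling back the canonical orientation of the $\mathrm{SU}(n+1)$-instanton moduli space of $P'$. Since $\mathrm{SU}(n+1)$ is simply connected, \cite[Theorem 4.6 (c), Theorem 4.7 (b)]{joyce2020orientations} tells us this orientation is independent of the $\Spinc$ structure, and in particular equals $o(P',\mathfrak{su}(n+1),\mathfrak s)$ for the $\Spinc$ structure $\mathfrak s$ appearing in the statement. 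The theorem is thereby reduced to comparing two $\Spinc$-based orientations built from the same $\mathfrak s$.

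Next I would invoke the $\mathrm{U}(n)$-equivariant splitting $\mathfrak{su}(n+1)=\mathfrak u(n)\oplus V_{st,\R}$, where $V_{st,\R}$ is the standard complex representation of $\mathrm{U}(n)$ viewed as a real representation of rank $2n$. At the level of associated bundles this gives $\mathfrak{su}(n+1)_{P'}\cong \mathfrak u(n)_P\oplus V_{st,\R}$, and the Dirac-type operator whose determinant line is being oriented in \Cref{nameofori} splits accordingly, yielding
\[
    o(P',\mathfrak{su}(n+1),\mathfrak s)\;=\;o(P,\mathfrak u(n),\mathfrak s)\cdot\varepsilon,
\]
where $\varepsilon\in\{\pm1\}$ is the canonical orientation from \Cref{nameofori} applied to the single-bundle operator coupled to $V_{st,\R}$.

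The computation of $\varepsilon$ is the crux and will be the main obstacle. The key observation is that $V_{st,\R}$ carries a canonical complex structure inherited from $V_{st}$, providing a second natural orientation on the same determinant line. I would argue that the discrepancy between this complex-structure orientation and the $\Spinc$-based canonical orientation of \Cref{nameofori} is the parity of the twisted complex index $\ind_\C(D^+_{V_{st}})-n\cdot\ind_\C(D^+_{\underline{\C}})$. This comparison is the delicate step: it requires carefully unpacking \Cref{nameofori} and is analogous in spirit to the linear-algebraic identification underlying \Cref{obstruction}, whose proof machinery I would expect to reuse. Once it is in place, the Atiyah--Singer index theorem on $X$ evaluates the parity to
\[
    \bigl\langle\tfrac12\bigl(c_1(S^+)c_1(V_{st})+c_1(V_{st})^2\bigr)-c_2(V_{st}),\,[X]\bigr\rangle\pmod{2},
\]
which combined with the previous steps yields the stated formula.
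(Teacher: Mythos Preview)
Your proposal is correct and follows the same route as the paper: reduce to the $\Spinc$-independent $\mathrm{SU}(n+1)$ orientation via the embedding $\mathrm{U}(n)\hookrightarrow\mathrm{SU}(n+1)$, split $\mathfrak{su}(n+1)=\mathfrak u(n)\oplus V_{st,\R}$, and compare the complex-structure orientation with the $\Spinc$ orientation on the $V_{st}$ summand by an Atiyah--Singer parity computation. The only difference is packaging: the paper isolates the comparison $o(P,V,J)\cdot o(P,V,\mathfrak s)=(-1)^x$ for an arbitrary complex representation $(V,J)$ as a standalone proposition (\cref{complexandcanori}) and then specializes to $V=V_{st}$, whereas you carry out that comparison inline for $V_{st}$.
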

 Note that the notations $o(P, \mathfrak{s})$ and $o(P, \mathfrak{u}(n), \mathrm{SU}(n+1))$ are not appeared in~\cite{joyce2020orientations}. They are notations which used only in this note. 



\begin{acknowledgement}
    The author  would like to show his deep appreciation to his adviser Mikio Furuta for helpful suggestions, useful discussions. The author also thanks to Yoshihiro Fukumoto, Nobuo  Iida, Hokuto Konno Shinichiro Matsuo, and Masaki Taniguchi for helpful suggestions. 
    The author is supported by JSPS KAKENHI Grant Number 21J22979 and WINGS-FMSP program at the Graduate school of Mathematical Science, the University of Tokyo.
\end{acknowledgement}

\section{Settings}\label{settings}
\begin{itemize}
\item Let $(X, g)$ be an oriented closed Riemannian $4$-manifold. 
\item Let $G$ be a connected compact Lie group and $P$ be a principal $G$-bundle on $X$. Let $V$ be a finite dimensional oriented real vector space with inner product and $\rho \colon G \to \mathrm{SO}(V)$ be an orthogonal representation. We set $V_P=P \times_{\rho} V$ be the associated vector bundle. 
\item Let $\mathcal G= C^{\infty}(X, \text{Aut}(P))$ be a gauge group. We will also denote by $\rho$ the map $\text{Aut}(P) \times V_P \to V_P$ which is given by $[p, g]\cdot [p, v] \mapsto [p, \rho(g)v]$. 
\item Let $\mathcal A$ be the set of smooth connections of $P$. Let $A_0 \in \mathcal A $ be a reference connection. Let $k \ge 3$. Let $\mathcal{A}_{k+1}$ be the set $\{ A_0 + a \mid a \in L^2_{k+1}(E_0^-) \}$ and $\mathcal G_{k+2}:=\{ u \in L^2_{k+2}(\mathrm{Aut}(P))\}$. We write $\mathcal{B}_{k+1}=\mathcal{A}_{k+1}/\mathcal{G}_{k+2}$, the ambient space. 
\item Let $\mathfrak s$ be a $\Spinc$ structure on $X$ and $S=S^+\oplus S^-$ be the spinor bundle of $\mathfrak s$. We have the Clifford multiplications: 
\[
c_T \colon T^*X\otimes S^+ \to S^-, \; c_+ \colon (\R \oplus \wedge^+)\otimes S^+ \to S^+.
\]  
\item We set $E_0^+(P) :=(\R \oplus \wedge^+)\otimes V_P,\; E_0^-(P):=T^*X \otimes V_P$, $E_1^+(P) :=S^+ \otimes_{\R} V_P$ and $E_1^-(P):=S^- \otimes_{\R} V_P$. Let $E_i(P)=E_i^+(P) \oplus E_i^-(P)$. We sometimes write $E_i^{\pm}$ for $E_i^{\pm}(P)$. 
\item 
Note that $E_i$ has the Clifford action of $T^*X$. Let us fix a $\Spinc$ connection on $\mathfrak s$ and we construct an elliptic differential operator ${D_i}(A) \colon \Gamma(E_i^-) \to \Gamma(E_i^+)$ using a connection $A \in \mathcal A$ and the fixed $\Spinc$ connection. Note that ${D_0}(A)$ coincides with $d^*_A + d^+_A$ up to a scalar multiplication. 
\end{itemize}
\begin{remark}
    A $\Spinc$ structure on a $4$-manifold is determined by its spinor bundles. Thus we identified with a $\Spinc$ structure and a $\Z/2$-graded complex Clifford module $S=S^+ \oplus S^-$ with $\mathrm{rank}(S^{\pm})=2$. 
\end{remark}
\begin{definition}\label{taples}
    We define a three tuple $(D(a), \epsilon, f(b))$  as follows:
    \begin{itemize}
        \item Let $H=H^+ \oplus H^-$ be a $\Z/2$ graded real Hilbert space and $\epsilon$ be the $\Z/2$ grading operator.  
        \item Let $A$ be a compact hausdorff space and $B\subset A$ be a closed subset. 
        \item $D(a) \colon H \to H$ is an odd skew-adjoint bounded Fredholm operator which is parametrized by the point $a \in A$ continuously in the operator norm topology. 
        \item $f(b)$ is an odd skew-adjoint operator with $f(b)^2=-1$. We assume $f(b)$ is continuous in $b \in B$ in the operator norm and $D(b)f(b)+f(b)D(b)=\{D(b), f(b)\}=0$ for all $b \in B$. 
\end{itemize}
We can make an element in $KO(A, B)$ from this three tuples $(D(a), \epsilon, f(b))$ by taking the family index $\ind((D(a)+\tilde{f}(a))|_{H^+})$ where $\tilde{f}$ is an extension of $f(b)$. 
The results of Atiyah--Singer \cite{atiyah1969index} tells us that all elements in $KO(A, B)$ can be represented as above. 
If $D(a)$ is unbounded and $D'(a)=(1+D(a)D(a)^*)^{-1/2}D(a)=(1-D(a)^2)^{-1/2}D(a)$ satisfies above conditions, we write $(D'(a), \epsilon, f(b))$ by $(D(a), \epsilon, f(b))$. 
\end{definition} 

\section{Analytic description of the proof of the orientability of the moduli spaces}
We give a concise proof of the following theorem. 
\begin{theorem}\label{goal}
The determinant line bundle
\[
    \{ \det(\ker(d^*_A + d^+_A))\otimes \det(\coker(d^*_A + d^+_A))^*\}_{[A] \in \mathcal{B}_{k+1}} 
\]
 on the ambient space $\mathcal B_{k+1}$ is trivial for $k \ge 2$.  
\end{theorem}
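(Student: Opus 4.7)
The plan is to deduce triviality by exhibiting the family index $\ind D_0 \in KO(\mathcal B_{k+1})$ as the image under the forgetful map $K(\mathcal B_{k+1}) \to KO(\mathcal B_{k+1})$ of a class in complex $K$-theory; any such class has trivial real determinant line bundle, as its underlying real virtual bundle carries a canonical orientation from its complex structure.

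First I would observe that $E_1^\pm = S^\pm \otimes_\R V_P$ inherits a complex structure from that of the spinor bundle $S$, and that the twisted $\Spinc$ Dirac operator $D_1(A)\colon \Gamma(E_1^-)\to \Gamma(E_1^+)$ is $\C$-linear for this structure: its principal symbol $c_T(\xi) \otimes \mathrm{id}_{V_P}$ is $\C$-linear on the $S$-factor, while the connection twist by the real connection $A$ acts only on the $V_P$-factor. Consequently, $\{\ind D_1(A)\}$ defines a class in $K(\mathcal B_{k+1})$ whose image in $KO(\mathcal B_{k+1})$ has trivial real determinant line bundle.

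Next I would compare $\ind D_0$ with this class via the formalism of \cref{taples}. Using the Clifford multiplications $c_T$ and $c_+$ of the $\Spinc$ structure, I would construct, on an appropriate stabilization of $E_0(P) \otimes_\R S^+$, a bounded odd skew-adjoint, $A$-independent operator $f$ with $f^2 = -1$, together with an odd skew-adjoint Dirac-type family $\mathcal D(A)$ whose $KO$-index coincides with $\ind D_0 \in KO(\mathcal B_{k+1})$ and which strictly anti-commutes with $f$. Applied to the triple $(\mathcal D(A), \epsilon, f)$, \cref{taples} produces a class in $K(\mathcal B_{k+1})$ mapping to $\ind D_0$ under the forgetful map, which is enough to conclude by Step~1. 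Geometrically this is Witten-type localization: the deformation $\mathcal D(A) + tf$ for $t \to \infty$ spectrally splits into the $\pm i$-eigenspaces of $f$, on each of which $\mathcal D(A)$ becomes manifestly $\C$-linear and identifies with a copy of $D_1(A)$ (up to trivial summands).

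\emph{Main obstacle.} The key technical difficulty will be the construction of $(\mathcal D(A), f)$ with strict anti-commutation $f\mathcal D(A) + \mathcal D(A) f = 0$ on the nose, not merely modulo lower-order terms. Gauge-equivariance and norm-continuity of $f$ are automatic since it is built algebraically from the fixed $\Spinc$ Clifford data $c_T, c_+$; however, strict anti-commutation forces a careful matching of the Levi--Civita connection on $X$ with the fixed $\Spinc$ connection in the construction of $\mathcal D(A)$, and the stabilization must be tuned so that the $KO$-index of $\mathcal D(A)$ agrees with $\ind D_0$ exactly, rather than up to a possibly non-trivial real line bundle that would obstruct the orientation.
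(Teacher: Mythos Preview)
Your strategy is close in spirit to the paper's, but there are two genuine gaps.

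\textbf{The mechanism for producing a $K$-class is not what \cref{taples} gives.} In that definition the operator $f$ lives over a \emph{subset} $B\subset A$, and the output is a class in $KO(A,B)$. If your $f$ is defined over all of $\mathcal B_{k+1}$ you land in $KO(\mathcal B_{k+1},\mathcal B_{k+1})=0$; the homotopy $\mathcal D(A)+tf$ simply trivializes the index rather than lifting it to complex $K$-theory. An odd $f$ with $f^2=-1$ anti-commuting with $\mathcal D$ does not give a complex structure on $H^{\pm}$ (the candidate $\epsilon f$ squares to $+1$, not $-1$), so the ``$\pm i$--eigenspace'' picture does not produce a $\C$-linear operator in the way you describe. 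A trivialization would of course still suffice for the theorem, but then the claim becomes $\ind D_0=0$ in $KO(\mathcal B_{k+1})$, which is false already at the level of virtual rank unless you first subtract the constant index; you have not said how $\mathcal D(A)$ is arranged to do this.

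\textbf{The real obstacle is non-degeneracy of $f$, not strict anti-commutation.} The natural $A$-independent odd endomorphism built from the Clifford data is the map $\hat h\colon E_0\leftrightarrow E_1$ given by Clifford multiplication by a section $h\in\Gamma(S^+)$; it satisfies $\hat h^2=-|h|^2$. A nowhere-vanishing $h$ exists only when the Euler class of $S^+$ vanishes (e.g.\ $X$ almost complex, cf.\ \cref{complex}), so on a general closed $4$-manifold you cannot achieve $f^2=-1$ this way, and no stabilization by trivial summands removes the zero locus. Strict anti-commutation, by contrast, is cheap: the paper fixes it by replacing $\hat{\mathcal D}$ with $\tfrac12(\hat{\mathcal D}+s\hat{\mathcal D}s)$, a compact perturbation.

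The paper's proof avoids both problems by first reducing to loops: triviality of the determinant line is equivalent to vanishing of the monodromy, so one works in $KO([0,1],\{0,1\})$ for each gauge transformation $u$. Over the endpoints one has the extra operators $\hat f(0),\hat f(1)$ built from $u$, and after homotoping $u$ so that $\rho(u)(p)=1$ at the single zero $p$ of $h$, the combination $s'(t)=(1-t)\hat f(0)+t\hat f(1)+t(1-t)\hat h$ is invertible for every $t$. This loop-by-loop compensation for the zero of $h$ is exactly what your global construction lacks.
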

To show \cref{goal}, we only have to prove the following proposition.  
\begin{proposition}\label{main}
Take $u \in \mathcal G_{k+2}$ and let $A_t \in \mathcal A_{k+1}$ be a $1$-parameter family of connections of $P$ such that $A_t=A_0$ if $0 \le t \le 1/4$ and $A_t=u^*A$ if $3/4 \le t \le 1$. Then the family index  $\ind (D_0(A_t)) - \ind (D_0(A_0)) \in KO([0,1],\{0,1\})$ is $0$ for $k\ge 2$ where 
 the isomorphism between the two virtual vector bundles $\ind(D_1)$ and $\ind(D_0)$ is given by $id_{\ind(D_0)}$ at $\{0\}$ and is given by $\ind(D_0) \ni \phi \mapsto \rho(u)\phi \in \ind(D_1)$ at $\{1\}$. 
\end{proposition}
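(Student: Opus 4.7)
The strategy is to reduce the problem to the Dirac-type operator $D_1(A)$, which is $\C$-linear.

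The bundles $E_1^{\pm} = S^{\pm} \otimes_{\R} V_P$ inherit complex structures from the spinor factor $S^{\pm}$, and both the Clifford multiplications $c_T, c_+$ and the operator $D_1(A)$ (built from the $\Spinc$ connection on $S$ and the real connection $A$ on $V_P$) are $\C$-linear. Therefore the family index $\ind D_1(A_t)$, with boundary identifications coming from $\rho(u)$, defines a class in
\[
K([0,1], \{0,1\}) \;\cong\; \tilde K(S^1) \;=\; K^{-1}(\mathrm{pt}) \;=\; 0,
\]
whose image in $KO([0,1], \{0,1\}) = \Z/2$ under the forgetful map is therefore zero.

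It remains to show $\ind D_0(A_t) = \ind D_1(A_t)$ in $KO([0,1], \{0,1\})$. On a $\Spinc$ $4$-manifold, Clifford multiplication gives isomorphisms $(\R \oplus \wedge^+) \otimes \C \cong \mathrm{End}_\C(S^+)$ and $T^*X \otimes \C \cong \mathrm{Hom}_\C(S^+, S^-)$, exhibiting both $E_0$ and $E_1$ as real reductions of closely related complex Clifford modules of the same total rank $8\dim V$. Using this identification, one would construct a continuous homotopy of families of elliptic operators between $D_0(A_t)$ and $D_1(A_t)$ that is $\mathcal{G}$-equivariant, so that it respects the boundary identifications by $\rho(u)$. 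Homotopy invariance of the family index then yields $\ind D_0(A_t) = \ind D_1(A_t)$, which combined with the first step gives $\ind D_0(A_t) = 0$.

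The main obstacle is building this comparison homotopy in a manifestly gauge-equivariant manner, since the symbol-level identification between $D_0$ and $D_1$ is not itself a continuous family of elliptic operators in the space of all such. This is presumably where the Witten localization mentioned in the introduction enters: one defines a single auxiliary Dirac-type operator on an enlarged bundle containing both $E_0$ and $E_1$, depending on a real parameter $s \in [0, \infty)$, and as $s \to \infty$ the index localizes to the sum of a $D_0$-type and a $D_1$-type contribution. This produces the required identification of family indices directly, bypassing the need to exhibit a symbol-level homotopy by hand and automatically preserving equivariance under $\mathcal{G}_{k+2}$.
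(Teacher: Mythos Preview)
Your overall strategy matches the paper's: first observe that the $D_1$ family is complex, hence its class in $KO([0,1],\{0,1\})$ vanishes (since $K([0,1],\{0,1\})=\tilde K(S^1)=0$); then compare $\ind D_0(A_t)$ with $\ind D_1(A_t)$. The first step is correct and is exactly the content of the paper's first lemma.

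The gap is in your second step. You correctly identify the difficulty---there is no obvious gauge-equivariant homotopy of elliptic operators from $D_0(A_t)$ to $D_1(A_t)$, because they act on genuinely different bundles---but you do not resolve it. Your last paragraph is a description of what a proof \emph{might} look like, not a proof. The phrases ``one would construct'' and ``this is presumably where'' are doing all the work, and the complexification isomorphisms $(\R\oplus\wedge^+)\otimes\C\cong\mathrm{End}_\C(S^+)$ etc.\ that you invoke do not by themselves produce a real isomorphism $E_0\cong E_1$; for that one needs to choose a nowhere-vanishing section of $S^+$, which does not exist globally.

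Here is what the paper actually does, and it is the heart of the argument. Work with the combined operator $\hat{\mathcal D}(t)=\mathcal D_0(t)\oplus(-\mathcal D_1(t))$ on $E_0\otimes\R^2\oplus E_1\otimes\R^2$, so that the goal becomes showing its index is zero. To do this one must extend the boundary data $\hat f(0),\hat f(1)$ to an involution $s(t)$ anticommuting with $\hat\epsilon$ and with $\sigma(\hat{\mathcal D}(t))$ for all $t$. The paper first homotopes $u$ so that $\rho(u)(p)=1$ at a single point $p\in X$, then picks a section $h$ of $S^+$ vanishing exactly at $p$. Clifford multiplication by $h$ gives an odd map $\hat h\colon E_0\to E_1$ anticommuting with everything in sight, and one sets
\[
s'(t)=(1-t)\hat f(0)+t\hat f(1)+t(1-t)\hat h.
\]
The term $t(1-t)\hat h$ degenerates only on $\{p\}\times[0,1]\cup X\times\{0,1\}$, while $(1-t)\hat f(0)+t\hat f(1)$ degenerates only where $\rho(u)$ has eigenvalue $-1$ and $t=1/2$; the choice $\rho(u)(p)=1$ guarantees these loci are disjoint, so $s'$ is invertible everywhere. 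This is the concrete ``Witten localization'' step you allude to, and without it---in particular, without the interplay between the zero of $h$ and the normalization of $u$ at that point---the argument does not close.
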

From now, we give the proof of \cref{main}. 
We omit pull pack notation for a vector bundle on $X \times [0,1]$ which is a pullback of a vector bundle on $X$. 
We fix $k \ge 2$ and we write $\mathcal A, \mathcal G$ and $\mathcal B$ for $\mathcal A_{k+1}, \mathcal G_{k+2}$ and $\mathcal B_{k+1}$ respectively. 
For two operators $A$ and $B$, we denote by $\{A, B\}$ the operator $AB+BA$. 
\begin{definition}
We define a skew-adjoint elliptic differential operator $\mathcal{D}_i(t)$ which acts on $\Gamma(E_i(t)\otimes \R^2)$ by 
\[
\mathcal{D}_i(t)=\begin{pmatrix}0&D_i(A_t)\\-D^*_i(A_t)&0\end{pmatrix}\otimes \begin{pmatrix}1&0\\0&0\end{pmatrix}+ \begin{pmatrix}0&D_i(A_0)\\-D^*_i(A_0)&0\end{pmatrix} \otimes \begin{pmatrix}0&0\\0&-1\end{pmatrix} 
\]
where $D^*_i(\cdot)$ is the adjoint of $D_i(\cdot)$ with respect to the $L^2$ inner product. 
We set two isomorphisms $\epsilon_i \colon E_i \to E_i$ and $f_i(\tau) \colon E_i(\tau) \to E_i(\tau)$ ($\tau=0,1$) as
\begin{align*}
\epsilon_i&=\begin{pmatrix}1&0\\0&-1\end{pmatrix}\otimes \begin{pmatrix}1&0\\0&-1\end{pmatrix}, \\
f_i(0)&=1\otimes \begin{pmatrix}0&1\\-1&0\end{pmatrix},\;
f_i(1)= \rho(u)\otimes \begin{pmatrix}0&1\\0&0\end{pmatrix}
+\rho(u)^{-1}\otimes \begin{pmatrix}0&0\\-1&0\end{pmatrix}. 
\end{align*}
We see at once $(\epsilon_i, \mathcal{D}_i(t), (f_i(0), f_i(1)))$ define a three-tuple defined in~\cref{taples}. We write $\ind(\mathcal{D}_i)$ the element $KO([0, 1], \{0, 1 \})$ given by this three-tuple.  
\end{definition}


\begin{lemma}
The index $\ind (D_0(A_t)) - \ind (D_0(A_0)) \in KO([0,1],\{0,1\})$ in \cref{main} coincides with $\ind(\mathcal{D}_0)$. Moreover we have $\ind(\mathcal{D}_1)=0$.  
\end{lemma}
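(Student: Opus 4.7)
My plan has two parts, one for each claim. For the first, I would unpack the doubling construction of $\mathcal{D}_0$. The grading $\epsilon_0$ splits $E_0\otimes\R^2$ into its $\pm 1$-eigenspaces $H^{\pm}$, and a direct matrix computation shows that $\mathcal{D}_0(t)|_{H^-}\colon H^- \to H^+$ is block-diagonal with blocks $D_0(A_t)$ and $D_0^*(A_0)$ (up to a sign coming from the $\mathrm{diag}(0,-1)$ factor). Hence its family index over $[0,1]$ is $\ind(D_0(A_t))-\ind(D_0(A_0))$ as a virtual vector bundle. I would then check that the boundary operators $f_0(\tau)$ encode exactly the identifications prescribed in \cref{main}: at $\tau=0$ the standard skew matrix $f_0(0)$ merely exchanges the two copies and yields the canonical trivialization by the identity, while at $\tau=1$, using $D_0(u^*A_0)=\rho(u)^{-1}D_0(A_0)\rho(u)$ and a direct computation of $\{\mathcal{D}_0(1),f_0(1)\}=0$, one sees that $f_0(1)$ realizes the gauge isomorphism $\phi\mapsto\rho(u)\phi$.

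For $\ind(\mathcal{D}_1)=0$, the essential observation is that $E_1^{\pm}=S^{\pm}\otimes_{\R} V_P$ carries a natural complex structure $J=i_{S^{\pm}}\otimes\mathrm{id}_{V_P}$ coming from the complex structure on the spinor bundle $S^{\pm}$. Since Clifford multiplication on $S$ and the fixed $\Spinc$ connection are both $\C$-linear, the twisted Dirac operator $D_1(A)$ is $J$-linear for every connection $A$, and so is $D_1^*(A)$. The gauge action $\rho(u)$ only affects the real factor $V_P$, so it also commutes with $J$. Consequently $J\otimes\mathrm{id}_{\R^2}$ commutes with $\mathcal{D}_1(t)$, with $\epsilon_1$, and with both $f_1(0)$ and $f_1(1)$. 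The three-tuple $(\mathcal{D}_1,\epsilon_1,(f_1(0),f_1(1)))$ is therefore $\C$-linear, and its $KO$-index is the image under the forgetful map $K^0\to KO^0$ of a class in complex $K$-theory. Since
\[
    K^0([0,1],\{0,1\}) \;\cong\; \tilde K^0(S^1) \;\cong\; K^{-1}(\mathrm{pt}) \;=\; 0,
\]
that class vanishes, and hence so does $\ind(\mathcal{D}_1)$.

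The step I expect to demand the most care is matching the doubling construction for $\mathcal{D}_0$ to the exact class prescribed in \cref{main}, as opposed to a sign-twisted or orientation-reversed version of it. In particular, verifying that the asymmetric boundary operator
\[
    f_0(1) \;=\; \rho(u)\otimes\begin{pmatrix}0 & 1\\0 & 0\end{pmatrix} + \rho(u)^{-1}\otimes\begin{pmatrix}0 & 0\\-1 & 0\end{pmatrix}
\]
truly realizes the identification $\phi\mapsto\rho(u)\phi$ and anticommutes with $\mathcal{D}_0(1)$ in the precise sense demanded by \cref{taples} is where sign and convention bookkeeping is most likely to go wrong; the other pieces of the argument are either formal consequences of the doubling construction or a clean appeal to complex $K$-theory.
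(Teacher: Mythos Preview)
Your proposal is correct and follows essentially the same approach as the paper. The paper dismisses the first claim as ``clear from \cref{taples}'' without the unpacking you propose, and for the second claim argues exactly as you do: $E_1$ carries a complex structure making $f_1(0),f_1(1)$ complex linear, so $\ind(\mathcal{D}_1)$ lies in the image of $K([0,1],\{0,1\})\to KO([0,1],\{0,1\})$, which is the zero map (equivalently, as you compute, the source group vanishes).
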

\begin{proof}
    The first half of the theorem is clear from \cref{taples}. The vector bundle $E_1$ has a complex structure and $f_1(0), f_1(1)$ are complex linear. Since $K([0,1], \{0, 1\}) \to KO([0,1], \{0, 1\})$ is the $0$ map, we have the later half of the lemma. 
\end{proof}
A slight change of the proof of the above lemma actually shows that the following Lemma. 
\begin{lemma}\label{hat}
We set $V=E_0\otimes \R^2 \oplus E_1 \otimes \R^2, V(t)=V\mid_{X \times \{t\}}$. 
We set 
\[
\hat{\mathcal{D}}(t)=\begin{pmatrix}\mathcal{D}_0(t)&0\\0&-\mathcal{D}_1(t)\end{pmatrix}
\colon \Gamma(V(t)) \to \Gamma(V(t)). 
\]
We define $\hat{\epsilon} \colon V \to V$ and $\hat f(\tau) \colon V(\tau) \to V(\tau)$ ($\tau=0,1$) by
\[
\hat{\epsilon}=\begin{pmatrix}\epsilon_0&0\\0&-\epsilon_1\end{pmatrix}, 
\;
\hat{f}(\tau)=\begin{pmatrix}f_0(\tau)&0\\0&-f_1(\tau)\end{pmatrix}. 
\]
We see at once $(\hat{\mathcal{D}}(t), \hat{\epsilon}, (\hat f(0), \hat f(1)))$ is a three-tuple defined in~\cref{taples}. 
Then we have the element $(\hat{\epsilon}, \hat{\mathcal{D}}(t), (\hat f(0), \hat f(1))) \in KO([0,1], \{0,1\})$ coincides with 
$\ind(\hat{\mathcal{D}})=\ind (D_0(A_t)) - \ind (D_0(A_0))$. 
\end{lemma}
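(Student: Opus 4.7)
The plan is to exploit the block-diagonal structure of $\hat{\mathcal{D}}$, $\hat{\epsilon}$, and $\hat{f}$ with respect to the splitting $V = (E_0 \otimes \R^2) \oplus (E_1 \otimes \R^2)$, thereby reducing everything to the two pieces $(\epsilon_i, \mathcal{D}_i(t), (f_i(0), f_i(1)))$ already analyzed in the previous lemma.

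First I would verify that $(\hat{\epsilon}, \hat{\mathcal{D}}(t), (\hat f(0), \hat f(1)))$ satisfies the four conditions of \cref{taples}. Each requirement---skew-adjointness, Fredholm property and norm-continuity of $\hat{\mathcal{D}}(t)$, oddness with respect to $\hat{\epsilon}$, the identity $\hat f(\tau)^2 = -1$, and the anticommutation $\{\hat{\mathcal{D}}(\tau), \hat f(\tau)\} = 0$ for $\tau \in \{0,1\}$---is immediate block by block from the analogous statements for the original $(\epsilon_i, \mathcal{D}_i, f_i)$. One simply observes that simultaneously negating $\epsilon_1$, $\mathcal{D}_1$, and $f_1$ preserves all four conditions (the signs cancel pairwise).

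Next I would invoke the additivity of the $KO$-class assignment in \cref{taples} under direct sums of three-tuples. Since $\hat{\mathcal{D}}(t) = \mathcal{D}_0(t) \oplus (-\mathcal{D}_1(t))$ with correspondingly signed gradings and boundary data, additivity yields
\[
    \ind(\hat{\mathcal{D}}) \;=\; \ind(\mathcal{D}_0) \;+\; \ind\bigl(-\mathcal{D}_1;\,-\epsilon_1,\,-\hat f_1\bigr).
\]
The sign-flipped tuple on the right is still complex linear (the complex structure on $E_1$ commutes with the overall negation), so the argument of the previous lemma---namely that the class factors through complex $K$-theory, and that $K([0,1], \{0,1\}) \to KO([0,1], \{0,1\})$ is the zero map---applies verbatim to show the second summand vanishes. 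Combining this with the first half of the previous lemma then gives $\ind(\hat{\mathcal{D}}) = \ind(\mathcal{D}_0) = \ind(D_0(A_t)) - \ind(D_0(A_0))$, as required.

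The main (and still minor) obstacle would be bookkeeping the signs on the $E_1$-block---in particular, deciding whether negating the whole tuple $(\epsilon_1,\mathcal{D}_1,f_1)$ changes the $KO$-class by a sign. However, because the $E_1$-contribution vanishes for the purely $K$-theoretic reason recalled above, this subtlety never needs to be resolved, and the proof is indeed only a mild variation of the preceding one.
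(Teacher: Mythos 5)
Your proposal is correct and follows essentially the same route as the paper, which gives no written proof beyond the remark that ``a slight change of the proof of the above lemma'' suffices; your block-diagonal reduction, additivity of the family index, and the observation that the sign-flipped $E_1$-block still factors through $K([0,1],\{0,1\}) \to KO([0,1],\{0,1\}) = 0$ are exactly the intended ``slight change.''
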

Then we prove \cref{main}. 
\begin{proof}[proof of \cref{main}]
From \cref{hat}, we only have to prove that the index $(\hat{\mathcal{D}}(t), \hat{\epsilon}, \hat f(0), \hat f(1))$ is $0$. 
We will denote by $\sigma(\hat{\mathcal{D}}(t))$ the principal symbol of the operator $\hat{\mathcal{D}}(t)$. 
We only have to find a skew-adjoint isomorphism $s(t) : V(t) \to V(t)$ which satisfies the following conditions: 
    $s(t)$ is continuous in $t$,  $s(0)=\hat f(0)$, $s(1)=\hat f(1)$, $s^2=-1$, $\{ \hat{\epsilon}, s\}=0$ and $\{ \sigma(\hat{\mathcal{D}}(t)), s(t) \}=0$ for all $t \in [0,1]$. 
If there is such an operator, we can prove \cref{main} as follows. We set $\hat{\mathcal{D}}'(t)=(\hat{\mathcal{D}}(t)+s\hat{\mathcal{D}}(t)s)/2$. This operator is a sum of $\hat{\mathcal{D}}(t)$ and a compact operator. 
Moreover, we have $\hat{\mathcal{D}}'(\tau)=\hat{\mathcal{D}}(\tau)$ for $\tau=0,1$. 
Thus we have that the index $(\hat{\epsilon}, \hat{\mathcal{D}}'(t), \hat f(0), \hat f(1))$ coincides with the index what we want to calculate and we have $(\hat{\epsilon}, \hat{\mathcal{D}}'(t), \hat f(0), \hat f(1))=0$ since $s$ is an isomorphism and $\{\hat{\mathcal{D}}'(t), s(t) \}=0$.  

We deform the gauge transformation $u$ by a homotopy, we assume that $\rho(u)(p)=1$ for some $p \in X$. 
Let $h$ be a section of $S^+$ with $h(p)= 0$ and $h(x) \neq 0$ for $x \neq p$. 
We define the map $h: E_0  \to E_1$ by $h( \omega \otimes \xi) :=c_T(\omega)h\otimes \xi$ and $h(\eta \otimes \xi ):=c_+(\eta)h\otimes \xi$ for $\omega \in \Gamma(T^*X), \eta \in \Gamma(\R \oplus \wedge^+)$ and $\xi \in \Gamma(V_P)$. 
Then we set 
\[
\hat h=\begin{pmatrix}0& h^*\otimes id_{\R^2} \\ -h\otimes id_{\R^2} &0\end{pmatrix} \colon V \to V. 
\]
It is easy to see that $\{ \hat h , \hat{\epsilon}\}=0, \{ \hat h , \sigma(\hat{\mathcal{D}}(t))\}=0, \{ \hat h , \hat f(0)\}=0$ and $\{ \hat h , \hat f(1) \}=0$. 
We extend $\hat f(0)$ and $\hat f(1)$ to an operator on $[0,1]$ in the obvious way. 
We set the operator $s'$ by
\[
s'(t)\colon=(1-t)\hat f(0) + t\hat f(1) + t(1-t)\hat{h}. 
\]
One can check that $s'$ is a skew-adjoint operator with $\{s', \hat{\epsilon}\}=0, \{ s', \sigma(\hat{\mathcal{D}})\}=0$. $s'(\tau)=\hat{f}(\tau)$ for $\tau=0,1$. 
We show that $s'$ is an isomorphism.  
To prove this, we only have to show that at least one of $(1-t)\hat f(0) + t\hat f(1)$ or $t(1-t)\hat h$ is non-degenerate for all $(x , t) \in X \times [0,1]$ since $\{(1-t)\hat f(0) + t\hat f(1), \hat h\}=0$. 
The operator $t(1-t)\hat h$ degenerates on $\{p\} \times [0,1] \cup X \times \{0,1\} \subset X \times [0,1]$ and $(1-t)\hat f(0) + t\hat f(1)$ degenerates on $\{ x \in X \mid \rho(u)(x) \text{has}-1\text{as an  eigenvalue}\}\times \{1/2\} \subset X \times [0,1]$. 
From the assumption of $u$, we have $\rho(u)(p)=1$ therefore $(1-t)\hat f(0) + t\hat f(1)$ is non-degenerate on  $\{p\} \times [0,1] \cup X \times \{0,1\}$. 
Let $s=s'(s'(s')^{\ast})^{-1/2}=s'(-(s')^{2})^{-1/2}$ and we have $s^2=-1$. 
From the properties of $s'$, 
we have that that $s$ is an operator what we want. 
Thus we have \cref{main}. 
\end{proof}
\section{Analytic construction of a canonical orientation}
In this section, we describe an analytic construction of the canonical orientation which is given by Joyce, Tanaka, and Upmeier~\cite[Theorem 4.6(b)]{joyce2020orientations}. We continue to use the notations of \cref{settings}. 

\begin{theorem}\cite[Theorem 4.6(b)]{joyce2020orientations}\label{canonicalori}
There is a canonical orientation of the determinant line bundle on $\mathcal B$ which depends on an orientation of $H^1(X) \oplus H^+(X)$, an orientation of $V$ and the choice of $\Spinc$ structure.  
\end{theorem}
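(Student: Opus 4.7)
The approach is to orient $\det(D_0(A))$ by transporting the canonical complex orientation on $\det(D_1(A))$ via a Witten-type deformation of the difference operator $\tilde D(A) := D_0(A)\oplus (-D_1(A))$. Since $E_1^\pm=S^\pm\otimes_\R V_P$ inherit complex structures from $S^\pm$ and $D_1(A)$ is complex linear, the virtual index $\ind(D_1(A))$ is a complex virtual bundle on $\mathcal B$, so $\det(D_1(A))$ carries a canonical orientation depending only on the $\Spinc$ structure $\mathfrak s$. It therefore suffices to canonically orient $\det(\tilde D(A))$, or equivalently the determinant line of the operator $\hat{\mathcal D}(A)$ of \cref{hat} at a single connection.

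To orient $\det(\tilde D(A))$, I would reuse the bundle map $\hat h$ from the proof of \cref{main}, built from Clifford multiplication by a section $h\in\Gamma(S^+)$ with isolated zeros. Because $\hat h$ anticommutes with $\sigma(\tilde D(A))$ away from $h^{-1}(0)$, the rescaled family $\tilde D_\lambda(A):=\tilde D(A)+\lambda\hat h$ is Fredholm with the same virtual index for every $\lambda\ge 0$. A standard Witten localization --- based on the identity $\tilde D_\lambda^2 = \tilde D(A)^2 + \lambda\{\tilde D(A),\hat h\} + \lambda^2 |h|^2\cdot\mathrm{id}$ and an IMS partition-of-unity argument --- concentrates low eigenvectors near each zero of $h$ as $\lambda\to\infty$. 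At each zero $p$ the operator reduces to a harmonic-oscillator model whose kernel is canonically identified with fiber data built from $V$, $S^+|_p$, $T^*_pX$ and $\wedge^+|_p$. The orientation of $V$, the complex orientation of $S^+|_p$ coming from $\mathfrak s$, and the canonical orientation of $T^*_pX\oplus \wedge^+|_p$ from the orientation of $X$ assemble into a canonical orientation of this local kernel, and hence of $\det(\tilde D(A))$.

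The remaining sign ambiguity --- and the role of the orientation of $H^1(X)\oplus H^+(X)$ --- is fixed by comparing with the special case $V=\R$ and $A$ trivial: there $D_0(A)=d^*+d^+$, whose kernel modulo constants is $H^1(X)$ and whose cokernel modulo constants is $H^+(X)$. Requiring the analytically constructed orientation to agree with the chosen orientation of $H^1(X)\oplus H^+(X)$ in this case determines it uniquely. The resulting pointwise orientation then extends continuously over all of $\mathcal B$ by the triviality of the determinant line bundle established in \cref{goal}.

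The principal obstacle is the Witten localization step: one must not only establish the analytic concentration of low eigenvectors (which follows from the Bochner-type formula above together with standard eigenvalue estimates) but also identify the stabilized small-eigenvalue subspace with an explicit, canonically orientable finite-dimensional local model at each zero of $h$, and verify that the resulting orientation is independent of the auxiliary choices --- the section $h$, the locations of its zeros, and the scaling parameter $\lambda$. Tracking signs through the rescaling to the harmonic-oscillator model, and combining the local contributions consistently when $h$ has several zeros (as is generically forced by the Euler class of $S^+$), is the technical heart of the argument.
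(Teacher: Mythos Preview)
Your strategy is viable in outline but takes a genuinely different route from the paper. You perturb only by $\hat h$, which degenerates on the zero set of $h\in\Gamma(S^+)$, and then invoke a Witten localization to identify the small--eigenvalue subspace with explicit local harmonic--oscillator kernels at those zeros. The paper avoids this local analysis entirely by enlarging the operator: it forms $D_{P,P_0}=D_P(A)\oplus(-D_{P_0}(\theta))$, comparing $P$ with the \emph{trivial} bundle $P_0$ via a bundle map $\Phi\colon V_{P_0}\to V_P$ that is an isomorphism over an open set $U$, and then chooses $h'\in\Gamma(S^+)$ so that its zero set lies inside $U$. Because $\hat h'$ and $\hat\Phi$ anticommute, the combined perturbation $s=\hat h'+\hat\Phi$ is non--degenerate \emph{everywhere} (where one vanishes the other is invertible), so $D_{P,P_0}+ms$ is invertible for large $m$ and $\det(\ind(D_{P,P_0}))$ is trivialized in one stroke. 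The homology orientation and the orientation of $V$ enter directly through the factor $\det(\ind(D_0(\theta)))$ for the trivial bundle, rather than through a separate normalization against the case $V=\R$.

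What the paper's approach buys is that there is no local model to compute, no signs of zeros of $h$ to track, and independence of the auxiliary data reduces to a one--line homotopy (any two pairs $(h',\Phi)$ are joined through non--degenerate $s$'s). What your approach would buy, if the localization were carried out in full, is a description of the orientation as a sum of local contributions at the zeros of a spinor section; but the obstacles you correctly flag in your final paragraph --- identifying the local kernel explicitly, handling zeros of different local degree, and proving independence of the section $h$ --- are precisely what the extra $\hat\Phi$ term is designed to circumvent.
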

\begin{definition}\label{nameofori}
We write the orientation given in~\cref{canonicalori} by $o(P, V, \mathfrak s)$. 
\end{definition}
\begin{proof}[proof of \cref{canonicalori}]
In this proof, for a Fredholm operator $\mathcal F$, we set 
\[\det(\ind(\mathcal F)):=\wedge^{\text{max}}\ker(\mathcal F) \otimes (\wedge^{\text{max}}\coker(\mathcal F))^\ast.\] 

When a principal $G$-bundle $P_0$ is trivial, we have the trivial $P_0$ connection $\theta$.  In this case we have the natural isomorphism \[\det(\ind(D_0(\theta)))\cong \wedge^{\text{max}} (H^1(X, \R) \otimes V) \otimes (\wedge^{\text{max}}(H^0(X, \R) \oplus H^2(X, \R))\otimes V)^\ast. \] 
Thus we have that the determinant bundle has the trivialization given by the homology orientation and the orientation of $V$. 

Let $P$ be a non-trivial principal $G$-bundle on $X$. 
We take an open set $\emptyset \neq U \subset X$ such that there exist an isomorphism of the principal $G$-bundle $\tilde{\Phi} \colon P_0 |_{U} \to P|_{U}$. Using $\tilde{\Phi}$, we have a linear map $\Phi \colon V_{P_0} \to V_P$ which is isomorphism on $U$.  

From the \cref{main}, we only have to define an orientation of $\det(\ker(d^*_A + d^+_A)\otimes (\text{coker}(d^*_A + d^+_A))^*)$ for some $G$-connection $A$ and Riemannian metric $g$ on $X$. Using the complex structure of the spinor bundle of the $\Spinc$ structure $\mathfrak s$, $\det(\ker(D_1(A))\otimes \coker(D_1(A))^{\ast})$ has the canonical trivialization. 
Therefore if we take a canonical isomorphism of the determinant bundles at $[A] \in \mathcal{B}$
\begin{equation}\label{triv2}
    \det(\ind(D_0(A)) \to \det(\ind(D_1(A)))\otimes\det(\ind(D_0(\theta)))\otimes \det(\ind(D_1(\theta)))^\ast, 
\end{equation}
we have a canonical orientation of $\det(\ker(d^*_A + d^+_A)\otimes (\text{coker}(d^*_A + d^+_A))^*)$. 
Let $h'$ be a section of $S^+$ such that its zero set is in $U$, we set
\[
\hat{h}'_P=\begin{pmatrix}0& (h')^* \\ -h' &0\end{pmatrix} \colon E_0(P) \oplus E_1(P) \to E_0(P) \oplus E_1(P). 
\] 
We define $D_P(A)$ and $\epsilon_P$ as follows: 
\begin{align*}
D_P(A)&=\begin{pmatrix} 0&D_0(A)\\-D^*_0(A)&0 \end{pmatrix} \oplus \begin{pmatrix} 0&-D_1(A)\\D^*_1(A)&0 \end{pmatrix} , \;
\epsilon_P =\begin{pmatrix}1&0\\0&-1\end{pmatrix}\oplus \begin{pmatrix}-1&0\\0&1\end{pmatrix}  \\ \colon &\Gamma(E_0(P) \oplus E_1(P)) \to \Gamma(E_0(P) \oplus E_1(P)).
\end{align*}
It easy to see that $\{\epsilon_P , \hat{h}'_P \}=0$ and $\{ \sigma(D_P(A)), \hat{h}'\}=0$ where $\sigma(D_P(A))$ is the principal symbol of $D_P(A)$. 
Let us denote by $D_{P, P_0}$ and $\epsilon$ the operators 
\begin{equation}\label{DPP}
    D_{P, P_0}= D_P(A) \oplus(- D_{P_0}(\theta)) 
\end{equation}
and $\epsilon_P \oplus (-\epsilon_{P_0})$ respectively. We set 
\[
    \hat{\Phi}\colon (E_0(P) \oplus E_1(P))\oplus (E_0(P_0) \oplus E_1(P_0)) \to (E_0(P) \oplus E_1(P))\oplus (E_0(P_0) \oplus E_1(P_0)).
\]
by 
\begin{equation}\label{hatphi}
    \hat{\Phi} = \begin{pmatrix}0&-(\mathrm{id}_{T^*X \oplus\R \oplus \wedge^+\oplus S^+ \oplus S^-} )\otimes \Phi^{\ast}\\(\mathrm{id}_{T^*X \oplus\R \oplus \wedge^+\oplus S^+ \oplus S^-} ) \otimes \Phi&0\end{pmatrix}. 
\end{equation}   
We will denote by $\hat{h'}$ the operator $\hat{h'}_P \oplus (-\hat{h'}_{P_0})$. 
It is easy to see that $D_{P, P_0}$, $\hat{\Phi}$ and $\hat{h'}$ are skew-adjoint and $\sigma(D_{P, P_0})$, $\hat{\Phi}$, and $\hat{h'}$ anti-commute each other and anti-commute with $\epsilon$. 
From the definition of $h'$ and $\Phi$, the operator $s=\hat{h'}+\hat{\Phi}$ is non-degenerate. 
Thus we have that $D_{P, P_0}+ms$ is an isomorphism for large $m \ge 0$. Using the homotopy of the operator with parameter $m$, we can take a trivialization 
\begin{equation}\label{triv1}
   T(h', \Phi) \colon 
   \det(\ind(D_{P, P_0}))=\det(\ind(D_0(A) \oplus D_1(A))) \otimes \det(\ind(D_0(\theta) \oplus D_1(\theta)))^\ast \to \R. 
\end{equation}
Thus we have an isomorphism \eqref{triv2} using the homomorphism $\Phi$, the $\Spinc$ structure $\mathfrak s$, and the section $h'$ of the positive spinor bundle of $\mathfrak s$. 


Let us prove that the orientation given by the map $T(h', \Phi)$ is independent of the choice of $(h', \Phi)$. If we take two pairs $(h'_0, \Phi_0)$ and $(h'_1, \Phi_1)$, we can take a homotopy $H(t)$ from $h'_0$ to $h'_1$ and a homotopy $\Phi(t)$ from $\Phi_0$ to $\Phi_1$ such that $\hat{h}'(t)+\hat{\Phi}(t)$ is an isomorphism for all $t \in [0, 1]$. Thus we have the homotopy of two trivialization and we have two trivialization of $\det(\ind(D))$ coincides. 
\end{proof}

If $X$ has  an almost complex structure $J$, there is the $\Spinc$ structure $\mathfrak s_J$ whose spinor bundle $S_J$ is $T^{0, 1}X \oplus (\underline{\C} \oplus K^{-1})$. The Clifford actions $c_T$ and $c_+$ give an isomorphism 
\begin{equation}
    T^*X \oplus (\R \oplus \wedge^+) \to T^{0, 1}X \oplus \underline{\C} \oplus K^{-1}
\end{equation}
by $(\xi, \omega) \to (c_T(\xi)\cdot 1, c_+(\omega)\cdot 1)$. 
Thus we have a complex structure on $T^*X \oplus (\R \oplus \wedge^+)$ through this isomorphism. We have the canonical orientation of the determinant bundle on $\mathcal B$ by using this almost complex structure (see~\cite[Section 3, (c)]{donaldson1987orientation} or~\cite[Theorem 2.5]{joyce2020orientations}). 

\begin{proposition}\label{complex}
If $X$ has  an almost complex structure $J$, there is a canonical $\Spinc$ structure $\mathfrak s_J$ such that our canonical orientation $o(P, V, \mathfrak s_J)$ coincides with the canonical orientation given by the almost complex structure $J$.  
\end{proposition}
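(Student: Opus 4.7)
The plan is to run the construction of $o(P,V,\mathfrak{s}_J)$ from the proof of \cref{canonicalori} with a globally nonvanishing choice of the auxiliary section $h'$, exploiting that $S^+_J = \underline{\C}\oplus K^{-1}$ carries the canonical nowhere-vanishing section $h_J := 1 \in \Gamma(\underline{\C})$. With this choice, $\hat h'$ is pointwise a real-linear isomorphism on all of $X$ (on both $P$ and the trivial reference bundle $P_0$), so the contribution of $\hat\Phi$ in that proof can be discarded: for all sufficiently large $m$ the operator $D_{P,P_0} + m\hat h'$ is itself invertible, and this already trivializes $\det(\ind D_0(A))\otimes \det(\ind D_1(A))^* \otimes \det(\ind D_0(\theta))^* \otimes \det(\ind D_1(\theta))$. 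Combined with the canonical complex trivializations of $\det(\ind D_1(A))$ and $\det(\ind D_1(\theta))$ and the canonical real trivialization of $\det(\ind D_0(\theta))$ by the homology orientation and the orientation of $V$, this is $o(P,V,\mathfrak{s}_J)$.

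The key observation is that, by the definition of the induced complex structure on $T^*X\oplus(\R\oplus\wedge^+)$ recalled just before the statement, the bundle map $h_J\colon E_0\to E_1$, $v\otimes\xi\mapsto c(v)h_J\otimes\xi$, is $\C$-linear. I set $D_0^{\C}(A) := h_J^{-1}\circ D_1(A)\circ h_J$, which is a $\C$-linear elliptic operator on $\Gamma(E_0)$ whose principal symbol agrees with that of $D_0(A)=d^*_A + d^+_A$ up to a positive scalar, since both realize Clifford multiplication on the same $\Spinc$-module. The straight-line homotopy $(1-s)D_0(A) + sD_0^{\C}(A)$ is then a family of elliptic Fredholm operators of constant index, so running the $\hat h'$-trivialization with $D_0(A)$ or with $D_0^{\C}(A)$ yields the same orientation; the same substitution applies at the reference connection $\theta$.

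Once the entire diagonal operator has been replaced by $D_{P,P_0}^{\C}$, the trivialization coming from invertibility of $D_{P,P_0}^{\C} + m\hat h'$ is manifestly the standard complex orientation of the determinant of a zero-index virtual complex bundle. Separating the tensor factors through the canonical complex trivializations of $\det(\ind D_1(A))^*$ and $\det(\ind D_1(\theta))$ leaves, on $\det(\ind D_0^{\C}(A))\otimes \det(\ind D_0^{\C}(\theta))^*$, the product of the two canonical complex orientations coming from the induced complex structure on $E_0$. At the reference connection, this complex orientation on $\det(\ind D_0^{\C}(\theta))$ agrees, via the Hodge description of the index at $\theta$ and the isomorphism $h_J$, with the real trivialization coming from the homology orientation and the orientation of $V$ on the almost complex manifold $(X,J)$. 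Hence the induced orientation on $\det(\ind D_0(A))=\det(\ind D_0^{\C}(A))$ is precisely the canonical complex orientation associated to the induced complex structure on $E_0$, which is the almost-complex orientation.

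The main obstacle I expect is the sign bookkeeping when decomposing the zero-index complex trivialization into complex tensor factors and when matching the complex and real trivializations of $\det(\ind D_0(\theta))$ at the reference connection. This is made manageable by the fact that $h_J$ is globally defined and $\C$-linear, and that the same choice applies simultaneously for $P$ and $P_0$, so any residual global sign is a topological invariant of $(X,J)$ that cancels in the comparison of the two orientations of $\det(\ind D_0(A))$.
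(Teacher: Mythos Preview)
Your proposal is correct and follows the same core idea as the paper: choose $h' = 1 \in \Gamma(\underline{\C}) \subset \Gamma(S^+_J)$, which is globally nonvanishing, and use that the resulting map $h_J\colon E_0 \to E_1$ is $\C$-linear by the very definition of the induced complex structure on $T^*X \oplus (\R\oplus\wedge^+)$. The paper's proof is literally that two-sentence observation and nothing more; your version fills in what the paper leaves implicit, namely the elliptic homotopy from $D_0(A)$ to the conjugated complex operator $D_0^{\C}(A) = h_J^{-1} D_1(A) h_J$ and the matching of complex and real trivializations at the reference connection $\theta$. One minor remark: you need not discard $\hat\Phi$, since it too is $\C$-linear for the complex structure coming from $J$ (it acts as the identity on the $T^*X \oplus (\R\oplus\wedge^+)$ and $S$ tensor factors, where that complex structure lives), so the paper simply keeps it in the perturbation.
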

\begin{proof}
To construct the trivialization \eqref{triv1}, we can take a non-vanishing section $h'$ to be $1 \in \Gamma(\C)$. 
From the definition of the complex structure on $T^*X \oplus (\R \oplus \wedge^+)$ given by $J$, the map \eqref{triv1} coincides with the trivialization of $\det(\ind(D_{P, P_0}))$ given by the complex structure on $T^*X \oplus (\R \oplus \wedge^+)$. 
\end{proof}

\begin{proposition}
When the representation $\rho$ is the adjoint representation, our canonical orientation coincides with the orientation given in~\cite[Theorem 4.6 (b)]{joyce2020orientations}. 
\end{proposition}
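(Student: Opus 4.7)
The plan is to match the analytic construction of \cref{canonicalori} directly with the construction in [joyce2020orientations, Theorem 4.6(b)]. Both produce an orientation of $\det(\ind(d_A^* + d_A^+))$ by (i) stabilizing by the determinant of the index of the Dirac operator $D_1(A)$ on $S \otimes_{\R} \mathfrak g_P$, whose determinant line has a canonical complex trivialization coming from the complex structure on the spinor bundle, and (ii) excising to the trivial bundle $P_0$, where the orientation is obtained from the homology orientation of $X$ and the orientation of $\mathfrak g$. Our operator $D_{P, P_0}$ of \eqref{DPP} is built precisely to encode both steps analytically: the $-D_1(A)$ and $D_1(\theta)$ summands contribute the Dirac stabilization, and the pairing of the $P$-side with the $P_0$-side encodes the excision. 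The compact perturbation $\hat h' + \hat \Phi$ of the proof of \cref{canonicalori} then realizes the two canonical trivializations: $\hat h'$ implements Clifford multiplication by a generic section of $S^+$, producing the complex trivialization of $\det(\ind(D_1(A) \oplus (-D_1(\theta))))$, while $\hat \Phi$ implements the excision isomorphism $V_P \cong V_{P_0}$ on $U$.

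Concretely, I would first unpack JTU's trivialization of $\det(\ind(d_A^* + d_A^+))$ as the composition of (a) stabilization by $\det(\ind(D_1(A)))^{-1}$, (b) the complex trivialization of $\det(\ind(D_1(A)))$ arising from the complex structure on $S \otimes_{\R} \mathfrak g_P$, and (c) an excision isomorphism with the $P_0$-case. Then I would verify that the trivialization $T(h', \Phi)$ of \eqref{triv1}, for a suitable explicit choice of $(h', \Phi)$, computes exactly this composition. The concrete task is to construct a homotopy through skew-adjoint Fredholm families from $D_{P, P_0} + m(\hat h' + \hat \Phi)$ (for large $m$) to a block-diagonal operator whose blocks separately realize (a), (b), and (c); the zero set of $h'$ being contained in $U$ ensures that the two blocks do not interact outside $U$. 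The homotopy invariance of $T(h', \Phi)$ in $(h', \Phi)$, already established in the proof of \cref{canonicalori}, then promotes the identification to the canonical trivialization, independent of the auxiliary choices.

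The main obstacle will be sign bookkeeping: the skew-adjoint form $D_P(A) \oplus (-D_{P_0}(\theta))$, the grading $\epsilon_P \oplus (-\epsilon_{P_0})$, and the passage from self-adjoint to skew-adjoint Dirac operators via the prescription at the end of \cref{taples} must all be reconciled with the conventions of [joyce2020orientations] on $\Z/2$-graded determinant lines. A clean consistency check, which I would carry out first, is to specialize to the case $\mathfrak s = \mathfrak s_J$ for an almost complex structure $J$: \cref{complex} identifies our orientation with the Dolbeault orientation coming from the complex structure on $T^*X \oplus (\R \oplus \wedge^+)$, and JTU's orientation reduces to the same Dolbeault orientation in this case. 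Comparing the two constructions through a common almost complex reference makes the sign match manifest there, and the general case follows by a deformation argument, using the fact that every orientable $4$-manifold is cobordant to an almost complex one and that both orientations are natural under the stabilization-excision procedure whose analytic realization is encoded in $D_{P, P_0} + m(\hat h' + \hat \Phi)$.
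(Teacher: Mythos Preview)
Your high-level picture is in the right spirit, but the route you propose diverges from the paper's and the final step contains a genuine gap.

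The paper does \emph{not} try to globally decompose $D_{P,P_0}+m(\hat h'+\hat\Phi)$ into separate ``Dirac stabilization'' and ``excision'' blocks, nor does it pass through cobordism. Instead it chooses an open cover $X=U\cup V$ with $\hat\Phi$ invertible on $V$, takes a partition of unity $(\rho_U,1-\rho_U)$, and deforms the bounded transform $\tilde D=(1+DD^*)^{-1/2}D$ to the $0$-th order pseudodifferential operator $\rho_U\tilde D\rho_U+(1-\rho_U^2)\hat\Phi$. The kernel of the latter is supported in $U$. On $U$ the spinor section $h$ is nowhere zero, hence determines an almost complex structure $J$ on $U$ for which $h$ becomes the constant section $1\in\Gamma(\underline{\C})$ under $S^+|_U\cong\underline{\C}\oplus K^{-1}$. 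The point is that \emph{this local $J$ is exactly what JTU use} to orient in \cite[Theorem~4.6(b)]{joyce2020orientations}; and with respect to it $\hat h$ is $\C$-linear, so adding $\hat h$ to the localized operator gives an invertible operator homotopic to $(1+DD^*)^{-1/2}(D+\hat\Phi+\hat h)$, exhibiting the two trivializations as the same. So the comparison is entirely local on $U$, not via a global block decomposition.

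Your description of JTU's construction as ``global Dirac stabilization plus excision'' therefore misses the mechanism: their orientation is built from a local almost complex structure on a patch where the bundle trivializes, and the paper's proof hinges on identifying that $J$ with the one produced by the chosen spinor section $h$.

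The cobordism step at the end is the real gap. The assertion that ``every orientable $4$-manifold is cobordant to an almost complex one and both orientations are natural under stabilization--excision'' does not furnish a proof: you would need cobordisms carrying compatible $\Spinc$ structures and principal $G$-bundles, together with a verified naturality statement for \emph{both} orientation conventions along such cobordisms, none of which is established here. Even the almost-complex consistency check via \cref{complex} only handles globally almost complex $X$; it does not bootstrap to general $X$ without the localization argument above. If you want a self-contained proof, the missing idea is precisely the partition-of-unity localization that confines the index to $U$ and reduces the comparison to the local almost complex situation.
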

\begin{proof}

Let us take an open covering $X=U \cup V$ such that $V$ is not empty and the $\hat{\Phi}$ is isomorphism on $V$. Let $\{\rho_U, \rho_V \}$ be a partition of unity with respect to the open covering. 
We denote by $D$ the operator $D_{P, P_0}$ which acts on $\Gamma(E_0(P)\oplus E_1(P) \oplus E_0(P_0) \oplus E_1(P_0))$ given in the proof of \cref{canonicalori}, \eqref{DPP}. 
Set $\tilde D= (1+DD^*)^{-1/2}D=(1-D^2)^{-1/2}D$, a $0$-th order pseudo differential operator.  
We deform continuously the operator $\tilde D$ 
to the $0$-th order elliptic pseudo differential operator $\rho_U \tilde{D} \rho_U + (1-\rho_U^2)\hat{\Phi}$ since the principal symbol of $\tilde D$ anti-commutes with $\hat{\Phi}$. 
The kernel of the operator $\rho_U \tilde{D} \rho_U + (1-\rho_U^2)\hat{\Phi}$ is localized to the open set $U$. 
We have the non-vanishing section $h$ of the positive spinor bundle on $U$. We have an almost complex structure $J$ on $U$ such that $S^+ |_U \cong \underline{\C} \oplus \wedge^{0,2} \cong \underline{\C} \oplus K^{-1}$ and $h$ coincides the section $1 \in \Gamma(\underline{\C})$ on $U$. In~\cite[Theorem 4.6 (b)]{joyce2020orientations}, they give an orientation on $U$ by constructing the almost complex structure $J$. 
From the construction of the complex structure of $T^*X \oplus (\R \oplus \wedge^+)$ induced by $J$, the $\hat{h}$ is complex linear. The operator $\rho_U \tilde{D} \rho_U + (1-\rho_U^2)\hat{\Phi}+  \hat{h}$ is an isomorphism. We have a homotopy of the two isomorphisms between $\rho_U \tilde{D} \rho_U + (1-\rho_U^2)\hat{\Phi} + \hat{h}$ to $(1+DD^*)^{-1/2}(D + \hat{\Phi} + \hat{h})$. Thus our orientation coincides with the orientation given in~\cite[Theorem 4.6 (b)]{joyce2020orientations}. 
\end{proof}

\section{The dependence of the Canonical orientation}
We see how the canonical orientation $o(P, V, \mathfrak s)$ change if we take an other $\Spinc$ structure. 

In this section, let us denote by $E_1(P, \mathfrak s)$ the vector bundle $E_1(P)$ defined in \cref{settings} since this vector bundle depends on the $\Spinc$ structure $\mathfrak s$. 
First, we give an example that the orientation $o(P, V, \mathfrak s)$ is reversed when we take an other $\Spinc$ structure. 

\begin{example}\label{example}
    Let $X=\C P^2$ and $G=\mathrm{SO}(3)$. 
    We will denote by $\gamma$ a tautological bundle of $\C P^2$ and $P_\gamma$ the oriented orthonormal frame bundle of $E=\underline{\R} \oplus \gamma$. Note that $E$ is the adjoint bundle of $P_\gamma$ in the standard representation $\rho_{st}$ of $\mathrm{SO}(3)$. 
    Let $K$ be the canonical line bundle of $\C P^2$. We have the $\Spinc$ structure $\mathfrak s_{\text{can}}$ which is given by setting $S^+=\underline{\C} \oplus K^{-1}$ and $S^-=T^{0, 1}X$ as a $\Z/2$-graded complex Clifford module. 
    
    Let $A$ be a $G$-connection on $P_\gamma$. Let us show that 
    \[o(P_\gamma,\R^3, \mathfrak{s}_{can})=-o(P_\gamma,\R^3, \mathfrak{s}_{can}\otimes K).
    \] 
    Recall that the orientation 
    \begin{equation}\label{complexori}
        \det(\ind_{\R}(D_1(A, \mathfrak{s}_{\text{can}}))\otimes \det(\ind_{\R}(D_1(\theta, \mathfrak{s}_{\text{can}})))^\ast \to \R
    \end{equation}
    is given by the complex structure of the kernel and the cokernel of the operator $D_1(A, \mathfrak{s}_{\text{can}}) \oplus D_1(\theta, \mathfrak{s}_{\text{can}})$. 
    If we reverse the complex structure of $S=S^+ \oplus S^-$, the orientation of the determinant \eqref{complexori} is reversed since the index $\ind_{\C}(D_1(A, \mathfrak{s}_{\text{can}})-\ind_{\C}(D_1(\theta, \mathfrak{s}_{\text{can}}))=p_1(V_{P_\gamma})=1$ is odd. 
    Therefore, the orientation $o(P_\gamma,\R^3, \mathfrak{s}_{can})$ is reversed since the isomorphism $T(h', \Phi)$ is independent of the choice of complex structure of $S$. 
    The spinor bundle of the $\Spinc$ structure $\mathfrak{s}_{\text{can}} \otimes K$ coincides with $\overline{S^+} \oplus \overline{S^-}$ as an complex Clifford module on $\C P^2$. 
    Thus we have $o(P_\gamma, \R^3, \mathfrak{s}_{can})=-o(P_\gamma, \R^3, \mathfrak{s}_{can}\otimes K)$.  
\end{example}


\begin{definition}
    Let $\mathfrak s$ be a $\Spinc$ structure on $X$ and $L$ be a complex line bundle on $X$. Let $P_0=X \times G$ and let $P$ be an arbitrary principal $G$ bundle on $X$. We will denote by $D_{P, \mathfrak s}$ the operator $D_{P, P_0}$ defined in the proof of \cref{canonicalori}, \eqref{DPP}. 
    Let $v \in \Gamma(L)$ be an section. 
    We will denote by $V(P_0, P, \mathfrak{s})$ the vector bundle $(E_0(P)\oplus E_1(P, \mathfrak{s}))\oplus (E_0(P_0)\oplus E_1(P_0, \mathfrak{s}))$. 
    We set the following operators: 
    \begin{align*}
        &\tilde{D}_{P, \mathfrak s, L} = 
            \begin{pmatrix}
                D_{P, \mathfrak s} & 0 \\ 0 & -D_{P, \mathfrak{s}\otimes L}
            \end{pmatrix}, \; 
            \tilde{\epsilon}=\begin{pmatrix}
                1 & 0 \\ 0 & -1
            \end{pmatrix}
              \\
              \colon&\Gamma(V(P_0, P, \mathfrak{s}))\oplus(V(P_0, P, \mathfrak{s}\otimes L))) \to 
            \Gamma(V(P_0, P, \mathfrak{s}))\oplus(V(P_0, P, \mathfrak{s}\otimes L))). 
    \end{align*}
    For $v \in \Gamma(L)$, we define the following operators:
    \[
        \otimes v \colon \Gamma(E_1(P_0, \mathfrak{s})\oplus E_1(P, \mathfrak{s}))  \to \Gamma(E_1(P_0, \mathfrak{s}\otimes L)\oplus E_1(P, \mathfrak{s}\otimes L))  
    \]
    and 
    \begin{align*}
        &\hat{v}=\begin{pmatrix}
            0 & -(id_{E_0(P_0)\oplus E_0(P)} \oplus (\otimes v)^\ast) \\ id_{E_0(P_0)\oplus E_0(P)} \oplus (\otimes v) & 0
        \end{pmatrix} \\
        \colon &\Gamma(V(P_0, P, \mathfrak{s})\oplus V(P_0, P, \mathfrak{s}\otimes L)) \to 
            \Gamma(V(P_0, P, \mathfrak{s})\oplus V(P_0, P, \mathfrak{s}\otimes L)).
    \end{align*} 
    It is easy to see $\{\sigma(\tilde{D}_{P, \mathfrak s, L}), \hat{v}\}=0$, and $\tilde{D}_{P, \mathfrak s, L}$ and $\hat{v}$ are skew-symmetric and $\tilde{\epsilon}$ is symmetric. 
\end{definition}

We now prove the main theorem. 

\begin{proof}[Proof of \cref{obstruction}]
    \underline{Step.1} We show that if $w_2(l) \cup w_2(V_{P})=0$ we have $o(P, V, \mathfrak s)=o(P, V, \mathfrak{s}\otimes l)$. 
    Let $v$ be a transverse section of the complex line bundle $l$. 
    We set $\Sigma=v^{-1}(0)$ and $N(\Sigma)$ be a tubular neighborhood of $\Sigma$. 

    Let $\Phi \colon V_{P_0} \to V_P$ be a homomorphism. 
    If necessary, taking direct sum of the trivial $\R^n$ bundle with $V_{P_0}$ and $V_P$, we may assume that $\dim_{\R}(V_{P_0})=\dim_{\R}(V_P) \ge 3$. We take $\Phi$ to be isomorphic on $N(\Sigma)$ since we assume $w_2(l) \cup w_2(V_{P})=0$. 
    
    We identify $N(\Sigma)$ with the unit disc bundle of the normal bundle of $\Sigma$. Let $N_{r}(\Sigma) \subset N(\Sigma)$ be a disc bundle with the radius $r \in (0,1]$. 
    We assume that $\lvert v \rvert=1$ on $X\setminus N_{1/2}(\Sigma)$. 
    We take a section $h \in \Gamma(S^+)$ such that $h=0$ on $N_{3/4}(\Sigma)$ and $h \neq 0$ on $X \setminus N(\Sigma)$. We set $h'=h\otimes v \in \Gamma(S^+ \otimes l)$. Note that $h=h' \otimes v^\ast$. 
    
    In this setting, we have that the operators $\hat{v}$, $\hat{\Phi}$ and 
    \begin{align*}
        \hat{h}\oplus(-\hat{h'})
        \colon \Gamma(V(P_0, P, \mathfrak{s})\oplus V(P_0, P, \mathfrak{s}\otimes L)) \to 
            \Gamma(V(P_0, P, \mathfrak{s}) \oplus V(P_0, P, \mathfrak{s}\otimes L))
    \end{align*}
    are anti-commutative. From the construction of these operators, we have a family of operators
    \[
        t(\hat{h}\oplus(-\hat{h'})) + (1-t)\hat{v}+ (\hat{\Phi}\oplus(-\hat{\Phi}))
    \] 
    and we have that these are isomorphism for $t \in [0, 1]$. This gives a homotopy of the two trivializations of the determinant $\det(\ind(D_0(A)))$. 

    \underline{Step.2} 
    We show that if $w_2(V_P)\cup w_2(l)=1$ we have $o(P,V, \mathfrak s)=-o(P, V, \mathfrak{s}\otimes l)$. 
    Let $P_{0, X}=X \times \mathrm{SO}(3)$ and $P_{0, \C P^2}=\C P^2 \times G$. 
    We set two principal $G \times \mathrm{SO}(3)$ bundles 
    \[
        P_X=P \times_X P_{0, X}, \; P_{\C P^2}= P_{0, \C P^2} \times_{\C P^2} P_\gamma
    \]
    where $P_\gamma$ is a principal $\mathrm{SO}(3)$ bundle on $\C P^2$ which appears in \cref{example}. Let $\rho$ is the representation of $G$ such that $P \times_{\rho} V =V_P$ and $\rho_{st}$ be the standard representation of $\mathrm{SO}(3)$. We set $V_X$ is the associated vector bundle of $P_X$ in the representation $\rho \times \rho_{st}$ of $G \times \mathrm{SO}(3)$ and we set $V_{\C P^2}$ is that of $P_{\C P^2}$. We take a four-tuple $(B, U, \Phi, h)$ for $(X, \mathfrak s, P_X)$ as follows:
    \begin{itemize}
        \item Let $U$ be an open set in $X$ and $B$ is an open set which is diffeomorphic to the unit ball in $\R^4$ and its closure $\bar{B}$ is a subset of $U$. 
        \item $\Phi \colon V_X \to V_0$ is a linear map which is isomorphic on $U$. 
        \item $h$ is a section of the positive spinor of the $\Spinc$ structure $\mathfrak s$ and which is non-zero on the set $X \setminus U$ and $h=0$ on a neighborhood of $\bar{B}$. 
    \end{itemize}
    We take a four-tuple $(B', U', \Phi', h')$ for $(\C P^2, \mathfrak s_{can}, P_{\C P^2})$ as well. 
    We take the connected sum $X\# \C P^2=(X\setminus B) \cup_{\partial B=\partial B'} (\C P^2 \setminus B')$ and let $\mathcal P =P_X \# P_{\C P^2}$. Let $A$ be a connection of $P_X$ which is flat on $U$ and $A'$ be a connection of $P_{\C P^2}$ which is flat on $U'$. 
    We will denote by $\tilde{A}$ the glued connection $A\# A'$ on $\mathcal P$. We take the glued $\Spinc$ structure $\tilde{\mathfrak{s}}=\mathfrak s \# \mathfrak s_{can}$ on $X\# \C P^2$. 
    Let $V_{\mathcal{P}}$ be the associated vector bundle of $\mathcal P$ in the representation $\rho \times \rho_{st}$. 
    We have glued map $\tilde{\Phi}=\Phi \# \Phi'$ from $V_{\mathcal{P}}$ to the trivial vector bundle. The map $\tilde{\Phi}$ is isomorphism on the open set $\tilde U=U\setminus B \cup_{\partial B=\partial B'} U' \setminus B' \subset X\# \C P^2$. We also have the glued section $\tilde{h}=h \# h'$. Note that $\tilde{\Phi}$ and $\tilde{h}$ will not vanish in a same point on $X \# \C P^2$. 
    For an elliptic operator $D$, we define an elliptic pseudo-differential operator $\tilde D$ as $(1+D^*D)^{-1/2}D$. 
    Let $D_{\mathcal P, \mathcal P_0}$ be the operator defined in the proof of~\cref{canonicalori}, \eqref{DPP} and let $\hat{\tilde{\Phi}}$ be the map defined in the proof of~\cref{canonicalori}, \eqref{hatphi} using the map $\tilde{\Phi}$. 
    We set $(\rho_{\tilde U}, 1-\rho_{\tilde U})$ be a partition of unity associated to the open covering $X\# \C P^2=\tilde U \cup (X\# \C P^2 \setminus \partial B)$.  
    There is a homotopy between two elliptic operators $\tilde D_{\mathcal P, \mathcal P_0}$ and $\rho_{\tilde U} (\tilde D_{\mathcal P, \mathcal P_0})\rho_{\tilde U}+ (1-\rho_{\tilde U}^2)\hat{\tilde{\Phi}}$ and one can check that $\ker(\rho_{\tilde U} (\tilde D_{\mathcal P, \mathcal P_0})\rho_{\tilde U}+ (1-\rho_{\tilde U}^2)\hat{\tilde{\Phi}})=\ker(\rho_\mathrm{U}(\tilde{D}_{P_X, P_0})\rho_U+ (1-\rho_U^2)\hat{\Phi}) \oplus \ker(\rho_{U'}(\tilde{D}_{P_{\C P^2}, P_0})\rho_{U'}+ (1-\rho_{U'}^2)\hat{\Phi'})$. 
    This homotopy of the elliptic operators induces the following two isomorphisms
    \begin{align*}
    &F_{\tilde{\Phi}} \colon \det(\ind(D_0(\tilde A))\oplus(-D_0(\theta)) ) \to \\
        &\det(\ind(D_0(A))\oplus(-D_0(\theta)) )\otimes \det(\ind(D_0(A'))\oplus(-D_0(\theta)) )
    \end{align*}
    and 
    \begin{align*}
        F_{\tilde{\mathfrak s}} \colon \det(\ind(D_{\mathcal P, \mathcal P_0})) \to \det(\ind(D_{P_X, P_0}))\otimes \det(\ind(D_{P_{\C P^2}, P_0})). 
    \end{align*}
    We see that $F_{\tilde{\mathfrak s}} |_{E_0(\mathcal P) \oplus E_0(\mathcal P_0)}=F_{\tilde{\Phi}}$ and $T(\tilde{h}, \tilde{\Phi})=(T(h, \Phi) \otimes T(h', \Phi') )\circ F_{\tilde{\mathfrak s}}$. Thus we have $F_{\tilde{\Phi}}(o(\mathcal P, \mathfrak{s}\#\mathfrak{s}_{\text{can}}))=o(P_X, \mathfrak s) o(P_{\C P^2}, \mathfrak{s}_{\text{can}})$.
    
    In the same way, we have $F_{\tilde{\Phi}}(o(\mathcal P, \mathfrak{s}\#\mathfrak{s}_{\text{can}}\otimes l\#K))=o(P_X, \mathfrak s \otimes l) o(P_{\C P^2}, \mathfrak{s}_{\text{can}}\otimes K)$. 
    From the definition of the vector bundle $V_{\mathcal{P}}$, we have 
    \[
        w_2(l\# K) \cup w_2(V_{\mathcal{P}})=w_2(l) \cup w_2(V_P) + w_2(K) \cup w_2(V_{P_\gamma})=1+1=0. 
    \]
    Thus we have $o(\mathcal P, \mathfrak{s}\#\mathfrak{s}_{\text{can}}\otimes l\#K)=o(\mathcal P, \mathfrak{s}\#\mathfrak{s}_{\text{can}})$ from \underline{Step 1.}. On the other hand, from~\cref{example}, we have $o(P_{\C P^2}, \mathfrak{s}_{\text{can}}\otimes K)=-o(P_{\C P^2}, \mathfrak{s}_{\text{can}})$. Thus we have $o(P_X, \mathfrak s \otimes l)=-o(P_X, \mathfrak s)$. 
\end{proof}

\begin{corollary}
    If the map $\pi_1(G) \to \pi_1(\mathrm{SO}(V))$ induced by the representation $\rho$ is trivial, $o(P,V, \mathfrak s)$ is independent of the $\Spinc$ structure $\mathfrak s$ on $X$. Especially, if $G$ is simply connected, 
    $o(P,V, \mathfrak s)$ is independent of $\mathfrak s$. 
\end{corollary}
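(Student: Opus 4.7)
The plan is to apply \cref{obstruction} and reduce the corollary to showing that $w_2(V_P)$ vanishes under the hypothesis. First I would use that any two $\Spinc$ structures on $X$ differ by tensoring with some complex line bundle $l$ (the set of $\Spinc$ structures is a torsor over $H^2(X;\Z)$), so \cref{obstruction} gives
\[
o(P, V, \mathfrak{s}) = (-1)^{\langle w_2(l)\cup w_2(V_P), [X]\rangle}\, o(P, V, \mathfrak{s}\otimes l).
\]
Thus independence of $\mathfrak{s}$ is equivalent to showing $w_2(V_P) = 0 \in H^2(X; \Z/2)$ for every principal $G$-bundle $P$.

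Next I would work universally at the level of classifying spaces. The bundle $V_P$ is classified by a composition $X \to BG \xrightarrow{B\rho} B\mathrm{SO}(V)$, so $w_2(V_P)$ is pulled back from the universal class $w_2^{\mathrm{univ}} \in H^2(B\mathrm{SO}(V); \Z/2)$. It therefore suffices to prove $(B\rho)^\ast w_2^{\mathrm{univ}} = 0 \in H^2(BG; \Z/2)$, as this implies $w_2(V_P)=0$ for every $P$ on every $X$.

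Since $G$ is connected, $BG$ is simply connected, so Hurewicz combined with the universal coefficient theorem yields a natural isomorphism $H^2(BG; \Z/2) \cong \mathrm{Hom}(\pi_1(G), \Z/2)$ coming from $\pi_2(BG) \cong \pi_1(G)$. Under this correspondence, the universal $w_2$ on $B\mathrm{SO}(V)$ identifies with the canonical reduction $\pi_1(\mathrm{SO}(V)) \to \Z/2$ (the identity when $\dim V \geq 3$, the mod-$2$ map when $\dim V = 2$, and the zero map when $\dim V \leq 1$). Hence $(B\rho)^\ast w_2^{\mathrm{univ}}$ corresponds to the composition $\pi_1(G) \to \pi_1(\mathrm{SO}(V)) \to \Z/2$, which is zero whenever $\rho_\ast$ is trivial. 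The ``especially'' clause follows since simple connectedness of $G$ forces $\pi_1(G) = 0$.

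The only genuinely nontrivial ingredient is the identification of $w_2^{\mathrm{univ}}$ with the tautological reduction $\pi_1(\mathrm{SO}(V)) \to \Z/2$; this is classical, reflecting that $w_2$ is the primary obstruction to lifting the structure group from $\mathrm{SO}(V)$ to $\mathrm{Spin}(V)$. Given this, the rest of the argument is a short formal application of \cref{obstruction}, with no analytic input required beyond what is already used there.
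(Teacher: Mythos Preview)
Your argument is correct and follows essentially the same approach as the paper: both reduce via \cref{obstruction} to showing $w_2(V_P)=0$, and both deduce this from the fact that $w_2(V_P)$ is obtained from a primary obstruction class by applying the coefficient map $\pi_1(G)\to\pi_1(\mathrm{SO}(V))\to\Z/2$. The only cosmetic difference is that the paper works directly on $X$ with the obstruction class $c\in H^2(X;\pi_1(G))$ and the induced coefficient homomorphism, whereas you phrase the same computation universally on $BG$ via $H^2(BG;\Z/2)\cong\mathrm{Hom}(\pi_1(G),\Z/2)$; these are two packagings of the same obstruction-theoretic fact.
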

\begin{proof}
    Let $f$ be the composition of the map $H^2(X, \pi_1(G)) \to H^2(X, \pi_1(\mathrm{SO}(V))) \to H^2(X, \Z/2)$ where the first map is induced by the map $\pi_1(G) \to \pi_1(\mathrm{SO}(V))$ and the second map is $\bmod  2$ reduction. Note that if $\dim V \ge 3$, the second map is an isomorphism. 
    The obstruction $c$ of the trivialization of the principal $G$-bundle $P$ takes value in $H^2(X, \pi_1(G))$. The second Stiefel-Whitney class of $V_P$ is give by $f(c)$. Thus we have the conclusion. 
\end{proof}
If $(\rho, V)$ is a complex representation with respect to the complex structure $J$ on $V$, we have the orientation $o(P, V, J)$ of $\det(\ind(D_0(A)))$ given by the complex structure. In this setting we have the following proposition. 

\begin{proposition}\label{complexandcanori}
    Let $x=\langle (c_1(S^+)c_1(V_P)+c_1(V_P)^2)/2 - c_2(V_P), [X] \rangle$. Then we have $o(P, V, J)o(P, V, \mathfrak s)=(-1)^x$. 
\end{proposition}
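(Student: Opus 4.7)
The plan is to express both orientations in terms of the analytic trivialization $T(h',\Phi)$ used in the proof of~\cref{canonicalori}, and reduce the problem to an index calculation on the twisted spinor bundle $E_1=S\otimes_\R V_P$. First, I would exploit the fact that because $(\rho,V)$ is complex, $V_P$ carries a complex structure $J_V$ which lifts through the real tensor product to complex structures on $E_0(P)\oplus E_1(P)\oplus E_0(P_0)\oplus E_1(P_0)$. All operators used to build $T(h',\Phi)$ — namely $D_{P,P_0}$, the homomorphism $\hat{\Phi}$ of~\eqref{hatphi}, and the Clifford-multiplication operator $\hat{h}'$ — commute with $J_V$, since $A$ is a $G$-connection with $G$ acting $\C$-linearly on $V$, $\Phi$ is induced by a $G$-equivariant bundle map, and multiplication by a section of $S^+$ leaves the $V_P$ factor untouched. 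Consequently, the straight-line homotopy $D_{P,P_0}+m(\hat{h}'+\hat{\Phi})$ remains $J_V$-complex-linear for every $m$, so $T(h',\Phi)$ takes the $J_V$-complex orientation of $\det(\ind(D_{P,P_0}))$ to the positive orientation of $\R$.

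Next I would use the decomposition~\eqref{triv2} to compare the two orientations. The orientation $o(P,V,\mathfrak s)$ is obtained by combining $T(h',\Phi)$ with the $J_S$-complex orientations (coming from the spinor bundle $S$) on $\det(\ind(D_1(A)))$ and $\det(\ind(D_1(\theta)))^{-1}$, together with the real $(V,H^*)$-orientation on $\det(\ind(D_0(\theta)))$; whereas $o(P,V,J)$ is by definition the $J_V$-complex orientation on $\det(\ind(D_0(A)))$. A basis-ordering check shows that the real $(V,H^*)$-orientation on $\ind(D_0(\theta))=H^1(X)\otimes V-(H^0(X)\oplus H^+(X))\otimes V$ agrees with the $J_V$-complex one, so the entire discrepancy $o(P,V,J)\cdot o(P,V,\mathfrak s)$ comes from the gap between the $J_S$- and $J_V$-complex orientations on $\det(\ind(D_1(A)))\otimes\det(\ind(D_1(\theta)))^{-1}$.

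To compute this gap, I would decompose $E_1=S\otimes_\R V_P$ using the commuting complex structures $J_S$ and $J_V$. The identification $V_P\otimes_\R\C\cong V_P\oplus\overline{V_P}$ gives the $J_S$-complex splitting $E_1\cong (S\otimes_\C V_P)\oplus (S\otimes_\C\overline{V_P})$, with $J_S=J_V$ on the first summand and $J_S=-J_V$ on the second. Because a complex $k$-dimensional space has opposite real orientations under $J$ and $-J$ precisely when $k$ is odd, the sign on the virtual index equals $(-1)^k$ with $k$ the complex index of the Spin$^c$-Dirac operator restricted to the ``opposite'' summand. Subtracting off the corresponding contribution for the trivial bundle and applying Atiyah--Singer yields
\[
  \int_X\bigl(\mathrm{ch}(V_P)-n\bigr)\,e^{c_1(S^+)/2}\,\hat{A}(X),
\]
whose degree-four part evaluates to $\bigl\langle c_1(S^+)c_1(V_P)/2+c_1(V_P)^2/2-c_2(V_P),[X]\bigr\rangle=x$. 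The main obstacle will be carefully tracking sign conventions — in particular identifying which of $V_P$ or $\overline{V_P}$ labels the ``opposite'' summand, since these a priori produce indices that differ by $\langle c_1(S^+)c_1(V_P),[X]\rangle$; the Wu relation $c_1(V_P)^2\equiv c_1(S^+)\cup c_1(V_P)\pmod 2$ on an oriented closed $4$-manifold is available to reconcile the two possibilities modulo~$2$.
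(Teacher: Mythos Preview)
Your approach is essentially the paper's: the paper also observes that the operators $\hat h'$ and $\hat\Phi$ defining $T(h',\Phi)$ are $J_V$-linear, so the whole comparison reduces to the discrepancy between the $J_S$- and $J_V$-orientations on $\det(\ind(D_1(A)))\otimes\det(\ind(D_1(\theta)))^{-1}$, and then uses the splitting $S\otimes_\R V_P\cong (S\otimes_\C V_P)\oplus(\bar S\otimes_\C V_P)$ (equivalent to your $S\otimes_\C V_P\oplus S\otimes_\C\bar V_P$) together with Atiyah--Singer on the summand where $J_S$ and $J_V$ disagree.

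There is one genuine slip in your last paragraph. The two candidate indices (twisting by $V_P$ versus by $\bar V_P$) differ by $\langle c_1(S^+)\cup c_1(V_P),[X]\rangle$, and the Wu relation only tells you this number is $\equiv c_1(V_P)^2\pmod 2$, \emph{not} that it vanishes mod~$2$. Concretely, on $X=\C P^2$ with $\mathfrak s=\mathfrak s_{\mathrm{can}}$ and $V_P=\gamma$ one gets $c_1(S^+)c_1(V_P)=3$, so the two candidate signs are honestly different. What Wu buys you is only that both expressions $(\pm c_1(S^+)c_1(V_P)+c_1(V_P)^2)/2-c_2(V_P)$ are integers, not that they agree mod~$2$. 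You therefore cannot hedge: you must actually identify which summand carries the opposite complex structure and compute \emph{its} index. The paper commits to the summand $\bar S\otimes_\C V_P$ and then invokes Atiyah--Singer without writing out the evaluation, so you should do the same careful bookkeeping rather than appeal to Wu.
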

\begin{proof}[Proof of \cref{Un}]
    The isomorphism \eqref{triv1}:
    \[
      T(h', \Phi) \colon  \det(\ind(D_0(A) \oplus D_1(A))) \otimes \det(\ind(D_0(\theta) \oplus D_1(\theta)))^\ast \to \R
    \]
    given in the proof of \cref{canonicalori}, coincides with the orientation given by the complex structure of $V_P$ since the maps $\hat{h'}$ and $\hat{\Phi}$ are complex linear with the complex structure of $E_0(P) \oplus E_0(P_0) \oplus E_1(P) \oplus E_1(P_0)$ induced by $J$. 
    The orientation $o(P, V, \mathfrak s)$ is given by trivialize $\det(\ind(D_1(A))\otimes \det(D_1(\theta)))^\ast$ using the complex structure $J_S$ of $S=S^+ \oplus S^-$ and $\det(D_0(\theta))$ trivialize using the homology orientation. 
    If we give a complex structure $S \otimes_{\R} (V_P \oplus V)$ using the complex structure $J$, we see that there is a natural isomorphism between the complex vector bundles $S \otimes_{\R} (V_P \oplus V) \cong \overline{S}\otimes_{\C} (V_P \oplus V) \oplus S\otimes_{\C} (V_P \oplus V)$. 
    If the complex index of the Dirac operator $D$ on $\overline{S}\otimes_{\C}(V_P \oplus V)$ is odd, the trivialization of $\det(\ind(D_1(A))\otimes \det(D_1(\theta)))^\ast$ given by $J_S$ is different from the trivialization given by $J$. 
    Thus we have $o(P, V, J)o(P, V, \mathfrak s)=1$ if and only if $\ind_{\C}(D)\equiv 0 \mod 2$. The proposition follows from the Atiyah-Singer index theorem. 
\end{proof}

We can now prove \cref{Un}. 

\begin{proof}[Proof of \cref{Un}]
    The orientation given in~\cite[Theorem 4.6 (c)]{joyce2020orientations} is defined by the complex structure given by the standard representation of $\mathrm{U}(n)$ and the canonical orientation  of the $\mathrm{SU}(n+1)$-instanton moduli space, which is not depend on the choice of $\Spinc$ structure. The conclusion follows from~\cref{complexandcanori}. 
\end{proof}

\bibliographystyle{jplain}
\bibliography{bibidatabase}

\end{document}